\newtheorem{definition}{Definition}
\newtheorem{theorem}[definition]{Theorem}
\newtheorem{example}{Example}
\newtheorem{algo}{Algorithm}
\newcommand{\Q}{\mathbb{Q}}
\def\stab{\mathop{\mathrm{Stab}}\nolimits}
\def\Stab{\mathop{\mathrm{Stab}}\nolimits}
\def\Gal{\mathop{\mathrm{Gal}}\nolimits}
\def\GGal{\mathop{\mathrm{GeoGal}}\nolimits}
\def\Sym{\mathop{\mathrm{Sym}}\nolimits}
\def\deg{\mathop{\mathrm{deg}}\nolimits}
\def\disc{\mathop{\mathrm{disc}}\nolimits}
\newcommand{\F}{\mathbb{F}}
\newcommand{\C}{\mathbb{C}}
\newcommand{\R}{\mathbb{R}}
\newcommand{\Z}{\mathbb{Z}}
\newcommand{\regsym}{\textsuperscript{\tiny\textregistered}}
\newcommand{\trademark}{\textsuperscript{\tiny\texttrademark}}
\begin{document}
%\begin{frontmatter}

\title{Constructions using Galois Theory}
\author{Claus Fieker}% and 
\address{Fachbereich Mathematik, Universit\"at Kaiserslautern, Germany}% and 
\email{fieker@mathematik.uni-kl.de}
\author{Nicole Sutherland}
\address{Computational Algebra Group, School of Mathematics and Statistics, University of Sydney, Australia}
\email{nicole.sutherland@sydney.edu.au}
\date{}
\keywords{Galois Groups, Fixed Fields, Splitting Fields, Radical extensions}
\subjclass[2020]{Primary 11R32 11Y40 12F10; Secondary 12F12}
%11R32  Galois theory
%11Y40  Algebraic number theory computations
%12F10  Separable extensions, Galois theory
%12F12  Inverse Galois theory

\begin{abstract}
We describe algorithms to compute fixed fields, 
splitting fields and towers of radical extensions 
without using polynomial factorisation in towers
or constructing any field containing the splitting 
field, instead extending Galois group computations for this task.
We also describe the computation of geometric Galois groups (monodromy groups)
and their use in computing absolute factorizations.
\end{abstract}
\maketitle

\section{Introduction}

This article discusses some computational applications of Galois groups. 
The main Galois group algorithm we use has been previously discussed 
in~\cite{fieker_klueners} for number fields 
and~\cite{suth_galois_global, hilbert} for function fields.
These papers, respectively, detail an algorithm which has no degree restrictions on input
polynomials and adaptations of this algorithm for polynomials over function 
fields. Some of these details are reused in this paper and we will refer to the 
appropriate sections of the previous papers when this occurs.

As the Galois group algorithms used are not theoretically degree restricted,
neither are the algorithms we describe here 
for computing splitting fields. Most Galois
group algorithms developed prior 
to~\cite{fieker_klueners, suth_galois_global, hilbert}
used tabulated information and were limited in both theory and practice.

Galois Theory began in an attempt to solve polynomial equations by 
radicals. As Galois groups in general correspond to splitting fields, it is 
worthwhile to consider how the computation of a Galois group can aid the 
construction of such fields.
In particular, in solving a polynomial by radicals we 
compute a tower of radical extensions as a splitting field.
Algorithm~\ref{tower} constructs splitting fields 
as towers of extensions using the Galois group and
can also be used to compute a tower of radical extensions if the 
Galois group of a given polynomial is soluble (Algorithm~\ref{radical_algo}).

A splitting field as a single extension can be computed as
a fixed field (Algorithm~\ref{fixed}), however, we go further in stating an algorithm which computes a
splitting field as a tower of extensions of the coefficient
field of a polynomial in Section~\ref{sect_split}.
The computation of the defining polynomials for the extensions in this tower,
described in Algorithm~\ref{next}, is the main contribution of 
Section~\ref{sect_split} and indeed this paper.
These defining polynomials are also proved to be correct (Theorem~\ref{th_next})
as well as the 
splitting fields which are defined by them (Theorem~\ref{th_split}).

The use of
Algorithm~\ref{tower} to compute %a splitting field as 
a tower of radical extensions is described in Section~\ref{sect_radical}.
First though, we consider the simpler computation of fixed fields of subgroups 
of Galois groups in Section~\ref{sect_fixed}.

Another Galois group computation is that of computing a geometric Galois group, 
the Galois group of a polynomial $f$ over $\Q(t)$ considered as a polynomial 
over $\C(t)$, discussed in Section~\ref{sect_geom}. This is also known as the monodromy group of the bi-variate polynomial with applications in algebraic geometry.
Geometric Galois groups help us compute the absolute factorization
of a polynomial over $\Q(t)$ (Section~\ref{sect_abs}).

The computations mentioned are available in {\sc Magma}~\cite{magma225}
for some coefficient rings. Examples in this paper are illustrated using {\sc Magma}.

%NEED TO MENTION INVARIANTS, TSCHIRNIS ETC
We begin with some definitions for easy reference.

\begin{definition}
A {\em splitting field} $S_f$ of a polynomial $f \in F[x]$ is a field, of minimal degree over the field $f$, over which
$f$ can be factorized into linear factors.
\end{definition}

\begin{definition}
The {\em Galois group}, $\Gal(f)$, of a polynomial $f$ over a field $F$ 
is the automorphism group 
of $S_f/F$ where $S_f$ is a splitting field of $f$ over $F$.
Elements of $\Gal(f)$ permute the roots of $f$.
\end{definition}

%When $f$ is irreducible over $F$ and of degree $n$
%we compute Galois groups as transitive subgroups of $S_n$, the {\em symmetric group}
%of degree $n$ containing all permutations of $n$ objects. 
%Such a group will permute the $n$ roots of $f$.
%A group $G$ of permutations of the $n$ elements of the set $\Omega$,
%is {\em transitive} if for all elements 
%$x, y \in \Omega$ there is a $g \in G$ such that $g x = y$. 

%We describe here some more group theory terminology which we use.
\begin{comment}
A {\em permutation representation} of $G$ which acts on a set of cardinality 
$l$ is a homomorphism from $G$ into $S_l$. 
A {\em wreath product} $J \wr H$ of the group $J$ by the group $H$ with respect
to the action of $H$ on the set $\Omega$ containing $n$ elements is the 
semi-direct product 
$\{(j, h) | j \in J^n, h \in H\}$ with 
$\displaystyle (j_1, h_1) (j_2, h_2) = (j_1 j_2^{h_1^{-1}}, h_1 h_2)$,
where $j_2^{h_1^{-1}}$ is the action of the permutation inverse to $h_1$
on the $n$ elements of $J$ in $j_2$.
\end{comment}

Invariants and resolvents are an important part of our algorithms.
Invariants are discussed extensively in~\cite{fieker_klueners, suth_galois_global}.  Let $R$ be a commutative unitary domain and 
$I(x_1, \ldots x_n) \in R[x_1, \ldots x_n]$. A permutation $\tau \in \Sym(n)$
acts on $I$ by permuting $x_1 \ldots, x_n$ and we write $I^{\tau}$ for this 
action. %See also Gei{\ss}ler and Kl\"uners~\cite{geissler_klueners} and~\cite{fieker_klueners}.

\begin{definition}
\label{invar_defn}
A polynomial $I(x_1, \ldots, x_n) \in R[x_1, \ldots, x_n]$
such that $I^{\tau} = I$ for all $\tau \in H$ for some group $H \subseteq S_n$
is said to be {\em $H$-invariant}.

When $H \subset G$, a $H$-invariant polynomial $I(x_1, \ldots, x_n) \in R[x_1, \ldots, x_n]$
is a {\em $G$-relative $H$-invariant} polynomial if $\stab_G I = H$.

For a $G$-relative $H$-invariant polynomial $I$,
$$Q_{(G, H)}(y) = \prod_{\tau \in G // H} (y - I^{\tau}(x_1, \ldots, x_n)),$$
is a $G$-relative $H$-invariant {\em resolvent polynomial}
where $G // H$ denotes a {\em right transversal}, 
a system of representatives for the right cosets $H \tau$.
%If $G = S_n$ then we call $Q$ an {\em absolute resolvent}, otherwise we call $Q$ 
%a {\em relative resolvent}.
\end{definition}

To ensure, when necessary, that a resolvent polynomial has a
root in $F$ which is a single root, as used in~\cite[Theorem 5]{stauduhar},
a suitable {\em Tschirnhausen transformation} can be applied to all variables of
the invariant we are using, giving
a change of variable (\cite{tignol} Section 6.4). 
For more information see~\cite[Section 3.7]{suth_galois_global}.

\section{Galois Group Computation}
All our algorithms here depend on the following infrastructure constructed to
compute Galois groups in the first place. Here we summarize the results
we are going to use: Starting with a polynomial $f \in F[x]$ for a field
$F$ where we can do computations, we assume
\begin{enumerate}
  \item an oracle to compute approximations to the roots of $f$ in a 
    constructive field, e.g. for $F = \Q$, this will be an unramified
    extension of a $p$-adic field, for $F=\F_q(t)$ this will
    be a Laurent-series field over an extension of $\F_q$.
    In general, this is a completion at a finite place.
  \item an oracle that bounds the size of all roots in a ``canonical'' field
    corresponding to the completions at the infinite places
  \item an oracle to ``lift'' or reconstruct elements in $F$ from
    elements in the finite completions and a bound on the 
    size in the infinite completions.
  \item given any subgroup $U$ of the Galois group and any maximal subgroup
    $H$ of $U$, find a $U$-relative $H$-invariant with integral coefficients.
  \item given any invariant find a bound on the size of the evaluation
    at the roots in the infinite completions in any ordering
\end{enumerate}
Those oracles are readily available from the implementation in {\sc Magma}
and described in~\cite{fieker_klueners, suth_galois_global, hilbert}.
\section{Fixed Fields}
\label{sect_fixed}
The Fundamental Theorem of Galois Theory displays a correspondence between 
subgroups of Galois groups and their fixed fields.
%For those polynomials for which we can compute their Galois groups 
We state the following algorithm to 
compute fixed fields of subgroups $U$ of these Galois groups.

The computation is similar to the computation of the Galois group. We compute an invariant and roots to some 
useful precision and from this we can compute a polynomial with at least one root fixed by $U$
and with degree that of the index of $U$ in $G$.
If $U$ is smaller than the Galois group then none of the roots will lie in the
coefficient ring of $f$.
%// No roots in smaller field since I is $G$ relative?
This polynomial will define the fixed field of the given subgroup 
%since $U$ is contained in the Galois group of its fixed field
and can be mapped back to be over the coefficient ring of the original polynomial.

\begin{algo}[Compute a Fixed Field of a subgroup of a Galois group (\cite{fieker_klueners_impl})]
\label{fixed}
Given a subgroup $U \subseteq G = \Gal(f), f \in F[x]$,
where $F$ may be $\Q, \Q(\alpha), \F_q(t), \F_q(t)(\alpha)$ or $\Q(t)$,
compute the subfield of $S_f/F$ fixed by $U$.

\begin{enumerate}
\item Compute a $G$-relative $U$-invariant polynomial $I$.
\item Compute a right transversal $G//U$.
%\item Compute a Tschirnhausen transformation $T$ such that
%$$\#\{I^{\tau}(T(r_1), \ldots, T(r_n)) : \tau \in G//U\} = \#G//U.$$
%using roots $\{r_i\}_{i=1}^n$ of $f$ to some low precision in a
%splitting field. %used for the Galois group computation.
\item\label{fixed_bound} Compute a bound $B$ on the evaluation of $I$ at the roots
$\{r_i\}_{i=1}^n$ and compute the roots to a precision that allows
the bound $B$ to be used.
\item Compute the polynomial $g$ with roots
$\{I^{\tau}(r_1, \ldots, r_n) : \tau \in G//U\}.$
\item\label{fixed_map} Map the coefficients of $g$ back to the coefficient ring of $f$ using $B$. The
resulting polynomial defines the fixed field of $U$.
\end{enumerate}
\end{algo}

For more information on Step~\ref{fixed_bound} and~\ref{fixed_map} see~\cite[Sections 3.8 and 3.2.1]{suth_galois_global}.

Note, here and in subsequent algorithms, we reconstruct a polynomial
from its roots. Given that the roots are only known in approximation, we need
to find a size--bound in order to deduce a precision so that the above oracles
can recover the exact data. Using the fact that the coefficients are
elementary symmetric functions in the roots, bounds are immediate. The
bounds depend on the degree and, in characteristic zero, the number of monomials
to be added. 

\begin{theorem}
\label{th_fixed}
Algorithm~\ref{fixed} computes the fixed field of a subgroup 
$U \subseteq \Gal(f), f \in F[x]$ where $F$ may be $\Q, \Q(\alpha), \F_q(t), \F_q(t)(\alpha)$ or $\Q(t)$.
\end{theorem}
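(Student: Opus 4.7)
The plan is to argue via the fundamental theorem of Galois theory: the fixed field $S_f^U$ of $U$ has degree $[G:U]$ over $F$ and is generated by any element of $S_f$ whose stabilizer in $G$ is exactly $U$. Accordingly, I would let $\alpha := I(r_1, \ldots, r_n)$ be the evaluation of the $G$-relative $U$-invariant $I$ at a fixed ordering $r_1, \ldots, r_n$ of the roots of $f$ in $S_f$, and show that the polynomial $g$ produced by the algorithm is (up to a Tschirnhausen correction) the minimal polynomial of $\alpha$ over $F$. Since $g$ has degree $|G//U| = [G:U]$, this will imply $F(\alpha) = F[y]/(g)$ has degree $[G:U]$ over $F$ and coincides with $S_f^U$.

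First, I would verify that $\Stab_G \alpha = U$ under the natural action of $G$ on $S_f$. The containment $U \subseteq \Stab_G \alpha$ is immediate from $I^\sigma = I$ for $\sigma \in U$. The reverse inclusion uses that $I$ is $G$-\emph{relative} $U$-invariant, so $\Stab_G I = U$ as a polynomial, which passes to the evaluation once we have arranged, if necessary by a Tschirnhausen transformation as described after Definition~\ref{invar_defn}, that the values $\{I^\tau(r_1, \ldots, r_n) : \tau \in G//U\}$ are pairwise distinct. With distinctness in hand, the $G$-orbit of $\alpha$ has size $[G:U]$ and consists precisely of the roots of $g$, so $g$ equals the minimal polynomial of $\alpha$ over $F$.

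Next, I would justify that the coefficients of $g$ actually lie in $F$, and that Step~\ref{fixed_map} recovers them exactly. The coefficients are elementary symmetric functions in a full $G$-orbit, hence $G$-invariant polynomial expressions in $r_1, \ldots, r_n$; by the oracles of Section~2 such expressions lie in $F$. The approximate values produced in Step 4 are computed in the finite completion; the size bound $B$ of Step~\ref{fixed_bound} (combined with the size-bound-and-lifting oracles (ii),(iii) summarised in Section~2, and the elementary symmetric function bounds mentioned in the paragraph preceding the theorem) determines a sufficient working precision so that the ``lift'' oracle returns the exact coefficients. Putting the two parts together, $g \in F[y]$ is the minimal polynomial of the generator $\alpha$ of $S_f^U$, which is what the theorem asserts.

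The main obstacle I anticipate is the distinctness of the roots of $g$: without it, $g$ is merely a power of the true minimal polynomial and the degree equality with $[G:U]$ fails. This is handled cleanly by the Tschirnhausen transformation machinery cited from~\cite[Section~3.7]{suth_galois_global}, and once distinctness is granted the remaining verification (stabilizer computation and reconstruction via bounds) is routine given the oracle framework of Section~2.
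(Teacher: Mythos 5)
Your proposal is correct and takes essentially the same approach as the paper: show that $\alpha = I(r_1,\ldots,r_n)$ (after a Tschirnhausen correction) has stabilizer exactly $U$, so that $g$ is its minimal polynomial and $F(\alpha)=S_f^U$, and then use the Galois-invariance of the coefficients together with the precision bounds to justify the lift to $F$. The paper phrases the middle step field-theoretically (supposing $\beta$ generates a proper subfield and deriving a contradiction with $G$-relative invariance of $I$) rather than via $\Stab_G\alpha=U$, and it cites Stauduhar's Theorem~4 for the coefficients lying in $F$; but these are just two ways of invoking the same Galois correspondence, and your version is actually more careful about spelling out why the passage from $\Stab_G I = U$ to $\Stab_G\alpha = U$ requires the evaluations $I^\tau(r_1,\ldots,r_n)$ to be pairwise distinct.
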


\begin{proof}
Let $U^F$ be the fixed field of $U$.
Each element $\gamma \in S_f$ can be written as some polynomial in the roots of $f$, that
is, as $\gamma = I(r_1, \ldots, r_n)$ for some polynomial $I$. 
Let $\sigma \in U$, then 
$\sigma(\gamma) = I^{\sigma}(r_1, \ldots, r_n)$ so that
$\sigma(\gamma) = \gamma$ and $\gamma \in U^F$ when $I$ is $U$-invariant.
If $I$ is not $U$-invariant then $I \not= I^{\sigma}$ and 
$\sigma(\gamma) \not= \gamma$ and so $\gamma \not\in U^F$.

Now let $I$ be a $G$-relative $U$-invariant polynomial and
$U^F \ni \beta = I^{\tau}(r_1, \ldots, r_n)$ be a root of $g$. 
Since $I$ is $U$-invariant at least the identity coset will correspond to
an element fixed by $U$.
Either $\beta$
generates $U^F$ and $g$ is its minimal polynomial and is therefore
irreducible or $\beta$ generates a subfield of $U^F$ and $g$ is reducible. 
Suppose $\beta$
generates a subfield of $U^F$, then $\beta$ is an element of that subfield which
is contained in $U^F$. But this subfield will be a fixed field for a group 
containing $U$ and $I$ will be invariant outside of $U$ and hence not a $G$-relative invariant. Therefore, $\beta$ does not generate a subfield of $U^F$ and 
generates $U^F$ itself.

\begin{comment}
From the Galois correspondence we know that the fixed field $F_U$ 
of $U$ has degree
$\#G/\#U$ which is also the degree of $g$. We also know that the fixed field can
be defined by an irreducible polynomial of degree $\#G/\#U$.

Also $U$ fixes no larger subfield than $F_U$????????????
$g$ is a $G$-relative $U$-invariant resolvent which has a root in $F$
iff $\Gal(f) \subseteq \tau U \tau^{-1}$. 
But $\Gal(f) = G \not subseteq \tau U \tau^{-1}$ so $g$ does not have a root in $F$.

Some element $\beta \in S_f$ will be a primitive element for the fixed field 
$F_U$, therefore $\beta = I(r_1, \ldots, r_n)$ for a $U$-invariant polynomial 
$I$. Its minimal polynomial which will define $F_U$ will have the conjugates of
$\beta$ as its other roots.

Let $\beta = I^{\tau}(r_1, \ldots, r_n) \in F_U, \tau \in G//U$, be a root of $g$. 
WHY DOES $\beta$ generate $F_U$??

Let $F_U = F[x]/g$. 
We have $\sigma(\beta) = \beta$ for $\sigma \in U$ since $I$ is $U$-invariant.
Therefore $U$ fixes $F_U$.

%Every field has a defining polynomial, why is $g$ one for $F_U$?

WHY IS $g$ IRREDUCIBLE OVER $F$?
Suppose $g = \prod (x - I^{\tau}(r_1, \ldots r_n))$ is reducible over $F$.
Let $g = g_1 g_2, g_1, g_2 \in F[x]$, $g_1$ irreducible. Let $\beta_1$ be a root
of $g_1$
\end{comment}

Theorem 4 of~\cite{stauduhar} guarantees that $g$ can be mapped back to $F$.

The correct bound and precision as discussed in~\cite[Section 3.8]{suth_galois_global}
ensures the accuracy of the polynomial $g$.

%There may be a larger group which fixes the field if $U$ is not normal
\end{proof}

\hrule
\begin{example}
\label{eg_fixed_field}
%Redone V2.25-6
We see that a defining polynomial for a fixed field can be computed 
fairly quickly. 

\begin{verbatim}
> Fqt<t> := FunctionField(GF(101));
> P<x> := PolynomialRing(Fqt);
> f := x^6 + 98*t*x^4 + (2*t + 2)*x^3 + 3*t^2*x^2 +
>           (6*t^2 + 6*t)*x + 100*t^3 + t^2 + 2*t + 1;
> time G, _, S := GaloisGroup(f); G;                               
Time: 0.280
Permutation group G acting on a set of cardinality 6
Order = 12 = 2^2 * 3
    (2, 3)(5, 6)
    (1, 2)(4, 5)
    (1, 4)(2, 5)(3, 6)
> subg := NormalSubgroups(G : IsTransitive);
\end{verbatim}
We take a normal subgroup of $G$ in a conjugacy class with $C_6$.
\begin{verbatim}
> TransitiveGroupDescription(subg[1]`subgroup);
C(6) = 6 = 3[x]2
> time Polynomial(Fqt, a), b where 
>              a, b := GaloisSubgroup(S, subg[2]`subgroup);
x^2 + 93*t^3 + 85*t^2 + 93*t
((((x2 + x5) - (x3 + x6)) * (((x1 + x4) - (x2 + x5)) * 
((x1 + x4) - (x3 + x6)))) * ((x1 + (x2 + x3)) - (x4 + (x5 + x6))))
Time: 0.010
\end{verbatim}
\end{example}
\hrule

\subsection{Galois Quotients}
\label{sect_quot}

For any subgroup $U$ of $G = \Gal(f)$, $G$ naturally acts on $G/U$.
The image $Q$ under this action is the Galois group of the fixed field of $U$.
Now given a (potential) quotient group $Q$, {\sc Magma} is able to find all
homomorphisms of $G$ onto $Q$, thus all possible subgroups $U$ inducing them.
The fields fixed by $U$ will have Galois group isomorphic to $Q$.

\begin{enumerate}
\item
computing all subgroups of $\Gal(f)$ with order equal to the quotient of
the order of $\Gal(f)$ and the degree of $Q$.
\item determining for which of these subgroups $U$
the permutation action of $\Gal(f)$ on the cosets $\Gal(f)/U$ is isomorphic 
to $Q$ and 
\item applying Algorithm~\ref{fixed} to these subgroups.
\end{enumerate}

\bigskip
\hrule
\begin{example}
%Redone for V2.25-6
We illustrate the use of the quotients by continuing Example~\ref{eg_fixed_field}.
\begin{verbatim}
> time GaloisQuotient(S, quo<G | subg[1]`subgroup>);
[
    $.1^2 + 93*t^3 + 85*t^2 + 93*t,
    $.1^2 + 92*t,
    $.1^2 + 11*t^2 + 22*t + 11
]
Time: 0.010
> [IsIsomorphic(quo<G | subg[1]`subgroup>, GaloisGroup(g)) : 
>                                                      g in $1];
[ true, true, true ]
> time GaloisQuotient(S, quo<G|>);
[
    x^6 + (12*t + 24)*x^5 + (60*t^2 + 38*t + 38)*x^4 + (59*t^3 + 
        49*t^2 + 98*t + 66)*x^3 + (38*t^4 + 97*t^3 + 70*t^2 + 
        52*t + 79)*x^2 + (91*t^5 + 50*t^4 + 62*t^3 + 83*t^2 + 
        79*t + 89)*x + 64*t^6 + 73*t^5 + 21*t^4 + 64*t^3 + 
        23*t^2 + 38*t + 30,
    x^6 + 98*t*x^4 + (99*t + 99)*x^3 + 3*t^2*x^2 + (95*t^2 + 
        95*t)*x + 100*t^3 + t^2 + 2*t + 1
]
Time: 0.000
> [IsIsomorphic(quo<G | >, GaloisGroup(g)) : g in $1];                
[ true, true ]
\end{verbatim}
\end{example}
\hrule

\section{Splitting Fields}
\label{sect_split}
There are at least three ways a field over which a polynomial splits can be constructed from a known Galois
group. The field fixed by only the trivial subgroup will be a splitting field.
of minimal degree.
Using Algorithm~\ref{fixed} above to compute a fixed field,
a splitting field will be computed as
a primitive extension of the coefficient field of the polynomial. 

We can also compute a splitting field as a tower of algebraic extensions of the 
coefficient field. We first compute a chain of subgroups from the Galois group
and compute their
fixed fields but we need to find defining polynomials over a field in the 
tower rather than over the coefficient field of the input polynomial.

The solution of a polynomial by radicals discussed in Section~\ref{sect_radical} gives a 
third way to construct a field in which a polynomial splits --- although not of minimal
degree for the given polynomial due to the necessity of adjoining roots of unity.
It will be a splitting field for a polynomial of larger degree.

In each approach, polynomials $g$ with roots
$\{I^{\tau}(r_1, \ldots, r_n) : \tau \in H//U\}$ for some subgroups $H$ and $U$ 
of a Galois group are computed. For the latter two approaches the computation of these
defining polynomials is achieved by applying Algorithm~\ref{next}.

Before we discuss our approach we will give a small example of computing a
splitting field using repeated factorization over extensions so we can see what 
we are improving on and also an example of using the fixed field of the trivial 
subgroup to compute a splitting field as a single extension.

\bigskip
\hrule
\begin{example}
\label{eg_fact}
%Redone for V2.25-6
We continue with Example~\ref{eg_fixed_field}.
\begin{verbatim}
> tt := Cputime(); 
> F := ext<Fqt | f>; _<y> := PolynomialRing(F);
> time fact := Factorization(Polynomial(F, f)); fact;
[   
    <y + 100*F.1, 1>,
    <y + (26*t*a^5 + (66*t + 66)*a^4 + 48*t^2*a^3 + (34*t^2 + 
        34*t)*a^2 + (28*t^3 + 64*t^2 + 27*t + 64)*a + (6*t^3 + 
        6*t^2))/(t^3 + 2*t^2 + 4*t + 2), 1>,
    <y^2 + ((13*t*a^5 + (33*t + 33)*a^4 + 24*t^2*a^3 + (17*t^2 
        + 17*t)*a^2 + (66*t^3 + 35*t^2 + 70*t + 35)*a + (3*t^3 + 
        3*t^2))/(t^3 + 2*t^2 + 4*t + 2))*y + ((68*t + 68)*a^5 
        + 38*t^2*a^4 + (9*t^2 + 9*t)*a^3 + (76*t^3 + 70*t^2 + 
        39*t + 70)*a^2 + (75*t^3 + 75*t^2)*a + (89*t^4 + 15*t^3 
        + 30*t^2 + 15*t))/(t^3 + 2*t^2 + 4*t + 2), 1>,
    <y^2 + ((62*t*a^5 + (2*t + 2)*a^4 + 29*t^2*a^3 + (50*t^2 + 
        50*t)*a^2 + (8*t^3 + 4*t^2 + 8*t + 4)*a + (92*t^3 + 
        92*t^2))/(t^3 + 2*t^2 + 4*t + 2))*y + ((2*t + 2)*a^5 + 
        13*t^2*a^4 + (27*t^2 + 27*t)*a^3 + (24*t^3 + 4*t^2 + 8*t 
        + 4)*a^2 + (23*t^3 + 23*t^2)*a + (65*t^4 + 45*t^3 + 
        90*t^2 + 45*t))/(t^3 + 2*t^2 + 4*t + 2), 1>
] 
Time: 0.020
> FF<b> := ext<F | fact[3][1] : Check := false>;
> _<z> := PolynomialRing(FF);
> time Factorization(Polynomial(FF, DefiningPolynomial(FF)));
[   
     <z + 100*b, 1>,
     <z + b + (13*t*a^5 + (33*t + 33)*a^4 + 24*t^2*a^3 + (17*t^2 
         + 17*t)*a^2 + (66*t^3 + 35*t^2 + 70*t + 35)*a + (3*t^3 
         + 3*t^2))/(t^3 + 2*t^2 + 4*t + 2), 1>
] 
Time: 2.050
> time Factorization(Polynomial(FF, $2[4][1]));
[
    <z + 100*b + (75*t*a^5 + (35*t + 35)*a^4 + 53*t^2*a^3 + 
        (67*t^2 + 67*t)*a^2 + (72*t^3 + 35*t^2 + 70*t + 35)*a + 
        (95*t^3 + 95*t^2))/(t^3 + 2*t^2 + 4*t + 2), 1>,
    <z + b + (88*t*a^5 + (68*t + 68)*a^4 + 77*t^2*a^3 + (84*t^2 + 
        84*t)*a^2 + (37*t^3 + 70*t^2 + 39*t + 70)*a + (98*t^3 + 
        98*t^2))/(t^3 + 2*t^2 + 4*t + 2), 1>
]
Time: 2.050
> Cputime(tt);
4.130
\end{verbatim}
\end{example}
\hrule

\begin{example}
\label{eg_fixed_split}
%Redone for V2.25-6
In contrast, we see here the splitting field computed much quicker 
as a direct extension of
$F_{101}(t)$ which is the coefficient field of the polynomial. 
\begin{verbatim}
> time G, _, S := GaloisGroup(f); G;
Time: 0.110
Permutation group G acting on a set of cardinality 6
Order = 12 = 2^2 * 3
    (2, 3)(5, 6)
    (1, 2)(4, 5)
    (1, 4)(2, 5)(3, 6)
> time FunctionField(GaloisSubgroup(S, sub<G | >));
Algebraic function field defined over Univariate rational
function field over GF(101) by
x^12 + 47*t*x^10 + 3*t^2*x^8 + (65*t^3 + 54*t^2 + 7*t
    + 54)*x^6 + (41*t^4 + 18*t^3 + 36*t^2 + 18*t)*x^4 +
    (14*t^5 + 61*t^4 + 21*t^3 + 61*t^2)*x^2 + 80*t^6 +
    77*t^5 + 75*t^4 + 64*t^3 + 31*t^2 + 88*t + 22
Time: 0.030
\end{verbatim}
This total computation time of 0.14s is one fifteenth of the time taken
using {\tt Factorization}.
\smallskip
\hrule
\end{example}

\bigskip
Now we present an algorithm to compute a tower of fixed fields of a polynomial 
given a chain of subgroups of its Galois group. This algorithm, which calls
Algorithm~\ref{next} to compute defining polynomials, can be used to 
compute a splitting field when called by Algorithm~\ref{split} 
as well as a tower of radical extensions when
called by Algorithm~\ref{radical_algo}.

\begin{algo}[A splitting field of a polynomial using a chain of subgroups of its Galois group]
\label{tower}
Given 
\begin{itemize}
\item a polynomial $f \in F[x]$, where $F$ is $\Q, \Q(\alpha)$ or $\F_q(t)$, 
of degree $n$ with $G = \Gal(f)$, and 
\item a chain $C$ of subgroups of $G$ ending with the trivial subgroup with 
\item corresponding invariants in $I$,
\end{itemize}
compute a splitting field for $f$, of degree $\#G$ over $F$,
as a tower of algebraic extensions of $F$.
\begin{enumerate}
\item 
Set $K = F, B = \{ 1 \}$ and $P = \{ \mathop{\mathrm{Id}}(G) \}$.
\item\label{subfield_tower} 
for each $C_k \not= G$ in the chain $C$ in descending order find the
minimal polynomial of a relative primitive element by
\begin{enumerate}
\item Compute a right transversal $T_k = C_{k-1} // C_k$.
\item\label{resolve} Compute the resolvent polynomial 
$$g = \prod_{\tau \in T_k} (x - I_k^{\tau}(T(r_1), \ldots, T(r_n))) \in K[x]$$ 
using Algorithm~\ref{next} with input $f, K, T_k, I_k, B$ and $P$ to
define the next extension $K \leftarrow K[x]/g(x)$ in the towerwhere $T$
is a Tschirnhausen transformation~(\cite[Section 3.7]{suth_galois_global}).
\item
Set $B \leftarrow [ \alpha^i b : b \in B, 1 \le i < \#T_k ]$ %B \cup \{ \alpha^i b : b \in B, 1 \le i < \#T_k \}$ 
where $\alpha$ is a root of $g$, a primitive element of
the extension in the tower defined by the resolvent polynomial in Step~\ref{resolve}.
\item
Set $P \leftarrow [ \tau \pi : \pi \in P,  \tau \in T_k]$.
\end{enumerate}
\item return $K$ as the splitting field of $f$ corresponding to $C$.
\end{enumerate}
\end{algo}

Algorithm~\ref{fixed} Step~\ref{fixed_map} maps elements of the local splitting
field back to $F$, as done by the computation of the Galois group,
\cite[Section 3.2.1]{suth_galois_global}. 
However, when computing a tower of extensions
the defining polynomial will be over the current top field in the tower so 
the coefficients need to be expressed with respect to an absolute basis to 
allow those coefficients to be mapped into $F$ by our oracles. This can be achieved using
Algorithm~\ref{next}.

Before considering the details of computing the resolvent polynomial, 
we first describe the preparation of the chain of subgroups
necessary to compute a splitting field as a tower of extensions.

\begin{algo}[A splitting field of a polynomial using its Galois group]
\label{split}
Given 
\begin{itemize}
\item
a polynomial $f \in F[x]$ of degree $n$, 
where $F$ is $\Q, \Q(\alpha)$ or $\F_q(t)$, 
\end{itemize}
compute a splitting field for $f$ 
of absolute degree the order of $\Gal(f)$,
as a tower of algebraic extensions of $F$.

\begin{enumerate}
\item Compute $G = \Gal(f)$.
\item If $G$ is the trivial group then $f$ splits over $F$. Stop.
\item Compute a descending chain of subgroups $C_k$ of $G$ 
and matching invariants $I_k$ starting with $C_0 = G$ and $I_0 = 0$ by 
setting $d = \{1, \ldots, n\}$, $e = []$ and
\begin{enumerate}
\item for the stabilizer $\Stab_G(e \cup \{d_i\})$ having smallest size 
(to fix the largest field)
over all $i$ move the corresponding $d_i$ from $d$ to $e$. If this stabilizer
is not the last group in the chain $C$ then append it to $C$ and the invariant
$x_{d_i}$ to $I$,
\end{enumerate}
until $d$ is empty.
\item Compute the splitting field using Algorithm~\ref{tower} given $f, C$ and $I$.
%\item Sort out names and roots
\end{enumerate}
\end{algo}

Note that a different choice of the chain of groups will result in an isomorphic
field. The chain of groups chosen in Algorithm~\ref{split} is to construct the 
tower with the largest extensions at the bottom of the tower.

Now we discuss the details necessary to compute a resolvent polynomial
over a field in a tower which will be the defining polynomial of the next
extension in the tower. Following this we will discuss the bounds required by
this Algorithm.

\begin{algo}[A resolvent polynomial over a field in a tower]
\label{next}
Given 
\begin{itemize}
\item a polynomial $f \in F[x]$ of degree $n$, 
where $F$ is $\Q, \Q(\alpha)$ or $\F_q(t)$,
\item a tower $K$ of extensions of $F$ corresponding to a chain of subgroups of
$\Gal(f)$ of length $k-1$, 
\item a transversal $T_k = C_{k-1} // C_k$, and a $C_{k-1}$-relative $C_k$-invariant $I_k$,
\item an absolute basis $B$ of $K$, formed by taking the product of power
bases $\{\alpha_i^j\}$ of $K$ and its coefficient fields,
and a set $P$ containing all products of permutations in
all transversals for the pairs of consecutive groups in the chain already used,
\end{itemize}
compute the resolvent polynomial $g \in K[x]$ which will define the next extension in 
the tower corresponding to this transversal and invariant.
\begin{enumerate}
\item
Compute the roots $\{r_i\}_{1\le i \le n}$ of $f$ in a local splitting field
to enough precision~\cite[Sections 3.2 and 3.8.1]{suth_galois_global}
to find a Tschirnhausen transformation $T$ such that 
$I_k^{\tau}(T(r_1), \ldots, T(r_n))$ is unique for each $\tau \in T_k$.
\item\label{bound_step}
Compute the necessary bounds for recognizing the coefficients we compute with
respect to the absolute basis of $K$ as elements of $F$ and
the roots $\{r_i\}_{1\le i \le n}$ of $f$ in a local splitting field
to the necessary precision to use these bounds.
\item\label{roots}
Evaluate the invariant $I_k$ at permutations of 
$\{T(r_i)\}_{1 \le i < n}$ to gain the conjugates
$$[[c_{\tau\pi} : \tau \in T_k] : \pi \in P] = [[I_k^{\tau\pi}(T(r_1), \ldots, T(r_n)) : \tau \in T_k] : \pi \in P]$$
of the primitive element $\beta$ of the extension being constructed
with respect to the extension $K/F$.
%For each product $\pi$ of permutations in $P$, 
%by each permutation $\tau \in T_k$ multiplied by $\pi$ to gain 
%$[c_{\tau\pi} : \tau \in T_k, \pi \in P ]$.
\item\label{ele_sym} Compute all the conjugates of all the non leading coefficients $g_i$ 
of $g$ as elementary
symmetric functions $[e_{i, \pi} : 1 \le i \le \#T_k]$ in $[c_{\tau\pi} : \tau \in T_k]$ for each $\pi \in P$.
\item\label{dual} Compute the conjugates of the dual basis $d_i/h'(\alpha)$ where $h$ is the relative defining
polynomial of $K$ over the next largest coefficient field in the tower,
$d(x) = h/(x-\alpha)$ with coefficients $d_i$
%https://math.stackexchange.com/questions/2096579/determining-a-dual-of-given-basis
and $\alpha$ is a root of $h$, a primitive element for $K$~\cite[III, Prop 2]{lang_algebra}.
This is trivial when $K = F$.
\item\label{matrix} 
For each non leading coefficient $g_i$ of $g$,
compute the coefficients with respect to the absolute basis of $K$ 
by multiplying
the $i$th vector of conjugates $[e_{i, \pi} : \pi \in P]$
by the dual basis matrix $[m_{\pi j}]_{\pi\in P, 1 \le j \le \#P}$.

%WHAT IS THIS MATRIX? WHICH BASIS ELEMENTS AND HOW TO FIT ALL CONJUGATES IN?
\item\label{retrieve_coeffs} For each non leading coefficient $g_i$ the resulting vector of coefficients %will lie in $F^{\deg{K/F}}$ and
can be mapped into $F^{\deg{K/F}}$ using the techniques of the relevant Galois
group computation, for example~\cite[Section 3.2.1]{suth_galois_global},
and these images can be used in a linear 
combination with the absolute basis $B$ of $K$ to gain the coefficient $g_i \in K$. The leading coefficient of $g$ is 1.
\end{enumerate}
\end{algo}

Assuming our input polynomial $f$ was monic with integral coefficients, all roots
$r_i$ of $f$ are algebraic integers or integral algebraic functions. Since all our invariants are in $R[x_1, \ldots, x_n]$, where $R = \Z, \F_q[t]$, all
quantities computed in the algorithm are algebraic integers or integral algebraic functions. The algorithm
recursively constructs an absolute basis by forming products of the
basis elements of each level, thus obtaining a field basis for the 
splitting field (and the relevant subfields). However, the integral elements
we need may not have integral coefficients with respect to this basis.
In order to overcome this, we use a different basis:
For any equation order $R[x]/h$ for an integral monic irreducible $h$,
the integral closure is contained in $1/h'(x) R[x]/h$, thus
scaling the product basis by $1/h'(x)$ renders the coefficients integral.
When computing the bounds, this additional multiplication has
to be dealt with.

\subsection{Bounds}
\label{bounds}
As in~\cite[Section 3.8]{suth_galois_global} it is necessary to have bounds
in order to recognize integral elements of $F$ from the preimages mapped back
from the completion which could have been of a truncation, rather than an exact
element. These bounds are required in Step~\ref{bound_step} in order to compute 
the precision the roots of $f$ should be computed to.
In the splitting field case we are not making decisions
based on these bounds as done when computing a Galois group
but these bounds are necessary to compute a precision to
use for the computation of the roots such that the computations of the resulting
defining polynomials are accurate.

We describe the computation of these bounds here.
Over characteristic $0$ fields it is useful to consider power sums of the 
roots in order to obtain better bounds.
By the theorem on elementary symmetric functions, we can 
compute the power sums first and then use Newton relations to obtain the 
elementary symmetric functions. While bounds obtained this way still have
the same dependence on the degree, the number of terms involved is much smaller,
hence in characteristic 0 the bounds obtained this way are better (smaller).

Express the conjugates as
$$I^{\tau}\left(T(r_1), \ldots, T(r_n)\right) = \sum_j c_j \prod_i T(r_i)^{d_{ij}}$$
(in which form any invariant can be written), remembering that the invariants
may have coefficients $c_j \in \F_q[t]$ and not only in $\Z$ when the
characteristic is not $0$~\cite[Section 3.7]{suth_galois_global}. 
\begin{comment}
$I^{\tau\pi}$ are the roots of an absolute defining
polynomial, $I^{\pi}$ are the roots of the absolute defining polynomial of the coefficient
ring. The roots of the next defining polynomial will be algebraic combinations of one of 
these conjugates. 
\end{comment}
The defining polynomial we are calculating is
$$\prod_{\tau \in T_k} (x-I^{\tau}(T(r_1), \ldots, T(r_n)))$$ 
but we need bounds on coefficients with respect to an absolute basis.
\begin{comment}
The largest sized coefficient of this polynomial will be the constant coefficient
which is $\prod_{\tau \in T_k} I^{\tau}(T(r_1), \ldots, T(r_n))$
or the coefficient of the $x^{n-1}$ term which is 
$\sum{\tau \in T_k} I^{\tau}(T(r_1), \ldots, T(r_n))$.
\end{comment}

When $f$ is over a number field its roots can be embedded in the complex field
and their complex modulus can be bounded. When $f$ is over a function
field we can instead bound the degrees of the 
roots~\cite[Section 3.8]{suth_galois_global}. Over a characteristic
0 function field we require both types of bounds on the degree and the 
coefficients as described in~\cite{hilbert}.

We will consider
these bound and precision computations for each type of arithmetic field.

\subsubsection{Number fields}
When $F$ is the rational field or a number field we can compute a bound on 
the complex modulus of the roots of $f$ and evaluate any transformations used
at this bound and
$I_k$ at these evaluations. This gives a bound $M_k$ on the $c_{\tau\pi}$.
We bound the power sums of the $c_{\tau\pi}$ by
$\#T_k M_k^{\#T_k}$ with the exponentiation covering the powers of $c_{\tau\pi}$
computed and the product with $\#T_k$ covering the sum of such powers.

Using Hadamard's inequality %on the determinant of a matrix 
we bound the determinant of the dual basis matrix,
taking sign into account by multiplying by 2, by 
$$\lceil 2 \deg(K/F) (\deg(K/F)-1)^{(\deg(K/F)-1)/2} \prod_{j=1}^{k-1} M_j^{\prod_{i=j}^{k-1} \#T_i} / \disc(K/F)^{1/2} \rceil.$$ 

Coefficients are computed from power sums at the end of Step~\ref{retrieve_coeffs} of Algorithm~\ref{next} so mapping back from the local splitting
field applies to the power sums rather than the coefficients themselves, hence
we bound the power sums.

% Since we "recognize" x*Den, we get
%   B := n*M^n*den_bound
% Now the usual playing with matrices and such gives for the coefficients
% of x:
% x_i <= n*(n-1)^((n-1)/2) * &* bas_bound / AbsDisc(K)^(1/2) * B
% TODO: maybe get bounds for the dual basis and hope it's better
% than the Hadamard bound. : det \le \prod lengths of column vectors
% only for complex matricies so no good over function fields.
% ie. hope that a bound will be better than
%   n*(n-1)^((n-1)/2) &* bas_bound / AbsDisc(K)^(1/2)
% we have to multiply by 2 to account for the sign...

\subsubsection{Global Function Fields}
There are several parts to a bound for these coefficients mapped back to $K$.

\begin{description}
\item[Bounding the degrees of the elementary symmetric functions]
($\#T_k M_k$) We bound the degrees of the roots of $f$. To bound the 
evaluation of a transformation
at these roots we multiply this degree bound by the degree of the transformation
and to bound the evaluation of $I_k$ we multiply by the total degree of $I_k$. 
The details
of this bound when $c_j \in \F_q(t)$ rather
than in $\F_q$ are given in~\cite[Section 3.8]{suth_galois_global}.
This bounds $c_{\tau\pi}$ by $M_k$. The elementary symmetric 
functions in these conjugates will have degree at most $\#T_k M_k$ since 
there are $\#T_k$ many conjugates all bounded by $M_k$.

\item[Compensating for denominators ($(\#T_k - 1) M_k$)]
\begin{comment}
Since we change basis to ensure that the coefficients are integral,
such coefficients have been multiplied by $h'(\alpha)$ and we need to include
this multiplication in the bound. This factor is bounded by $(\#T_k - 1) M_k$
since $h$ has degree $\#T_k$.
\end{comment}

Since we map integral elements back from the local splitting field we need
to multiply by denominators before applying the map and divide afterwards.
These denominators are
the product of the evaluation of the derivatives of the defining polynomials
in the tower of which $K$ is the top field, at the root of the defining 
polynomial used as the primitive element of each extension in the tower.
%The factor of this denominator at level $k$ is equal to 
%$\sum_{i < \#T_k} \prod_{i \not= j < \#T_k} (x_i - x_j)$,
%where $x_i$ are the roots of the defining polynomial of $K$,
%which can be bounded by $M_k$.
The denominators in the local splitting fields are multiplied with the 
coefficients before multiplying by the dual basis matrix in 
Step~\ref{matrix} of Algorithm~\ref{next}.
As they are evaluations of the derivatives of polynomials of degree $\#T_k$
at conjugates they are bounded by $(\#T_k-1) M_k$.

\item[Bounding the result of the matrix multiplication] ($\#T_k M_k$)
Hadamard's inequality applies only to matrices over the complex numbers 
so cannot be used over function fields.
However, a degree bound for each
$$\sum_{\pi \in P} (\sum_{\tau \in T_k} c_{\tau \pi}^j) m_{\pi i},\ \ \  i \in [1, \ldots, \#P]$$
will be the sum of the bound for $(\sum_{\tau \in T_k} c_{\tau \pi}^j)$,
computed above and the bound for the $m_{\pi i}$.
So we now only need to bound the entries of the matrix $[m_{\pi i}]$
in Step~\ref{matrix}.

The matrix contains coefficients computed from conjugates bounded by $M_k$ and as above for the elementary symmetric functions we have a bound of $\#T_k M_k$.

\item[The final bound]
The product of an elementary symmetric function by a denominator is bounded by $\#T_k M_k + (\#T_k - 1)M_k$.
Therefore, the degrees of the entries of the product matrix resulting from Step~\ref{matrix}
are bounded by $\#T_k M_k + (\#T_k - 1)M_k + \#T_k M_k$.

\end{description}

\subsubsection{Characteristic $0$ Function Fields}
Bounds for calculations of polynomials over $\Q(t)$ will be a combination
of the bounds for number fields and characteristic $p$ function fields.
Bounds will be on degrees as for the characteristic $p$ function fields 
and on coefficients of these functions as for the number fields. Using power
sums should be the more efficient approach in this case and there will be
no problems arising from attempting to divide by multiples of $p$ when computing
elementary symmetric functions from the power sums using Newton's relations
as was a problem in characteristic $p$.

\subsection{The remaining details}
The invariants and local splitting fields of $f$
available for use with number fields are mostly described
in~\cite{fieker_klueners}, those for use with characteristic $p$ function 
fields are mostly described in~\cite{suth_galois_global} and those
for characteristic 0 function fields in~\cite{hilbert}.

\begin{theorem}
\label{th_next}
Algorithm~\ref{next} computes the resolvent polynomial 
$$g = \prod_{\tau \in T_k} (x - I_k^{\tau}(T(r_1), \ldots, T(r_n))) \in K[x]$$ 
required by Algorithm~\ref{tower} Step~\ref{resolve}.
\end{theorem}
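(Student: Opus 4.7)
The plan is to verify that each step of Algorithm~\ref{next} implements a mathematical operation so that, composed in sequence, they produce the polynomial
\[
g(x) = \prod_{\tau \in T_k}\bigl(x - I_k^\tau(T(r_1), \ldots, T(r_n))\bigr) \in K[x]
\]
expressed with respect to the absolute basis $B$ of $K$; correctness of the reconstruction from approximate roots then follows from the bounds of Section~\ref{bounds}. Write $\beta_\tau = I_k^\tau(T(r_1),\ldots,T(r_n))$, so that the non-leading coefficients $g_i$ are (up to sign) the elementary symmetric functions in the $\beta_\tau$. Step~\ref{ele_sym} computes these directly, yielding $e_{i,\pi}$, one value for each $\pi \in P$. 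The Tschirnhausen transformation $T$ chosen in Step~1 guarantees the $\beta_\tau$ are pairwise distinct so that $g$ is the desired separable polynomial.

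First I would verify that the $c_{\tau\pi}$ computed in Step~\ref{roots} really are the conjugates of $\beta_\tau$ over $F$. By the inductive construction of the tower, the set $P$, formed as a product over all transversals used so far, supplies representatives for the cosets of $\Gal(S_f/F)$ modulo the stabilizer of $K$, so the action of $\pi \in P$ on the approximate roots realizes the full Galois orbit of any element of $K$. Consequently, for fixed $i$, the family $\{e_{i,\pi}\}_{\pi \in P}$ is precisely the set of conjugates of $g_i \in K$ over $F$.

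Next I would justify the recovery of an element of $K$ from its conjugates. Writing $K$ over its coefficient field in the tower as $L(\alpha)$ with $\alpha$ having minimal polynomial $h \in L[x]$, the dual basis of the power basis $\{1,\alpha,\ldots,\alpha^{\deg h - 1}\}$ under the $L$-trace pairing is $\{d_i/h'(\alpha)\}$ where $d(x) = h(x)/(x-\alpha) = \sum_i d_i x^i$ (\cite{marcus}); this is the object constructed in Step~\ref{dual}. Taking products of these level-wise dual bases down the tower gives an absolute dual basis $\{b_j^*\}$ to $B=\{b_j\}$ characterised by $\sum_{\pi \in P} b_i^\pi (b_j^*)^\pi = \delta_{ij}$. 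For any $\gamma = \sum_j a_j b_j \in K$ with $a_j \in F$, orthogonality of the trace pairing gives $a_j = \sum_{\pi \in P} \gamma^\pi (b_j^*)^\pi$, which is exactly the matrix--vector product carried out in Step~\ref{matrix} with $\gamma = g_i$ and matrix $[m_{\pi j}] = [(b_j^*)^\pi]$. After each $a_j$ is recognised as an element of $F$ by the oracle of \cite[Section 3.2.1]{suth_galois_global} in Step~\ref{retrieve_coeffs}, the sum $\sum_j a_j b_j$ returns $g_i \in K$ exactly, and the leading coefficient $1$ completes $g$.

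The main obstacle is the third paragraph: establishing rigorously that the product of relative dual bases along the tower really is an absolute dual basis, while simultaneously accommodating the integrality adjustment explained after the algorithm (scaling the product basis by $1/h'(\alpha)$ at each level so that evaluations of the derivatives pull denominators onto the basis side, keeping the quantities fed to the reconstruction oracle integral). Once this bookkeeping is verified, exactness of the reconstruction from approximations is immediate: Step~\ref{bound_step} computes the roots to precision exceeding the bounds derived in Section~\ref{bounds}, and those bounds are designed precisely so that the reconstruction oracle returns the correct $a_j \in F$ in each of the number field, global function field, and characteristic zero function field cases.
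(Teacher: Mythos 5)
Your proposal follows essentially the same route as the paper: identify the non-leading coefficients of $g$ as elementary symmetric functions in the $c_{\tau\pi}$, observe that the family $\{e_{i,\pi}\}_{\pi\in P}$ gives the full set of conjugates of $g_i$, and recover $g_i\in K$ from these conjugates by multiplying by the dual-basis matrix and mapping back through the reconstruction oracle with the bounds of Section~\ref{bounds}. Where you differ is that you state, but explicitly defer as ``the main obstacle,'' the key lemma that the product of level-wise dual bases $\{d_i/h'(\alpha)\}$ along the tower is an absolute dual basis for the product power basis $B$; the paper closes exactly this step with a short nested-trace computation,
\[
\mathrm{Tr}\bigl(\mathrm{Tr}(\alpha^i \beta^j \gamma_k \omega_l)\bigr)
  = \mathrm{Tr}\bigl(\beta^j \omega_l\, \mathrm{Tr}(\alpha^i \gamma_k)\bigr)
  = \delta_{ik}\,\delta_{jl},
\]
showing that the level-wise orthogonality relations compose by transitivity of the trace. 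Supplying that two-line calculation (and noting, as you do, that the $1/h'(\alpha)$ scalings at each level simultaneously handle integrality) would make your argument complete and essentially identical to the paper's.
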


\begin{proof}
In Step~\ref{roots} we computed 
$c_{\tau\pi} = I_k^{\tau\pi}(T(r_1), \ldots, T(r_n))$ 
in the local splitting field.
The leading coefficient of the resolvent polynomial $g$ is set to 1.
The conjugates of the non leading coefficients are computed as elementary 
symmetric functions in $c_{\tau\pi}$ for each $\pi$.

The necessary bounds have been explained above in Section~\ref{bounds}
and the roots of $f$ are computed to enough precision to use these bounds
as explained in~\cite[Section 3.8.1]{suth_galois_global} for example.

%A root of $g$ will be a primitive element for $K[x]/g$, this root 
%$\beta = I^{\tau}(r_i)$ for some $\tau$. The roots of $g$ are
%$[ I_k^{\tau}(T(r_i)) : \tau \in T_k ]$ of which the coefficients of $g$
%are elementary symmetric functions in. In Step~\ref{ele_sym} we have each 
%non leading coefficient of $g$ for $\pi \in P$.

%USING THE ABSOLUTE BASIS AND COEFFS WRT TO IT

%WHAT IS THE RELATIONSHIP BETWEEN $\pi \in P$ and coeffs of the absolute basis?
In Steps~\ref{dual} to~\ref{retrieve_coeffs} we convert from the conjugates of the coefficients
to the coefficients themselves. 
Since $\{d_j/f'(\alpha)\}_{0 \le j < n}$ is a basis dual to 
$\{\alpha^i\}_{0 \le i < n}$,
$$\mathrm{Tr}\left(\alpha^i \frac{d_j}{f(\alpha)}\right) = \begin{cases} 1, &j = i\\
0, &j \not= i \end{cases}.$$
But this trace is the product of the matrix $A$ whose columns contain 
conjugates
of powers of $\alpha$ with the matrix whose rows are the conjugates of the
elements of the dual basis so these matricies are inverse to each other.
For $\alpha \in K$, $\beta$ in the coefficient ring of $K$, $\{\gamma_k\}$
a dual basis for $\{\alpha^i\}$ and $\{\omega_l\}$ a dual basis for $\{\beta^j\}$
\begin{align*}
\mathrm{Tr}(\mathrm{Tr}(\alpha^i \beta^j \gamma_k \omega_l)) 
	&= \mathrm{Tr}(\beta^j \omega_l \mathrm{Tr}(\alpha^i \gamma_k)) \\
			   & = \begin{cases} 0, &i \not= k\\
			   \mathrm{Tr}(\beta^j \omega_l), &i = k \end{cases}\\
			   &= \begin{cases} 0, &j \not= l \text{ or } k \not= i\\
			   1, &j = l \text{ and } k = i\end{cases}
\end{align*}
so the product of the dual bases is a dual for the product basis and the 
corresponding matrices for the product basis and the product of the dual bases
are inverses.

%Since we have a vector of conjugates, 
The coefficients of $g$ will be given 
by the solution of $\mathbf{x} A = \mathbf{b}$ where $\mathbf{b}$ is the vector 
$[\sum_i x_i \alpha^i, \ldots \sum_i x_i (\alpha^i)^{(n)}]$ of conjugates.
We solve this system by multiplying by the dual basis matrix which is the 
inverse of $A$ to obtain the $x_i$ in a local splitting field. Mapping these
$x_i$ back to $F$ we then compute the coefficients $\sum_i x_i B_i$ using the 
absolute basis $B = \{B_i\}$.

\end{proof}

\begin{theorem}
\label{th_split}
Algorithm~\ref{tower} computes a splitting field of a polynomial
$f \in F[x]$ where $F = \Q, \Q(\alpha), \F_q(t)$ if the chain $C$ of 
subgroups ends with the trivial group and
Algorithm~\ref{split} computes a splitting field of such 
polynomials. 
\end{theorem}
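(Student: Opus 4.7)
The plan is to prove both claims by induction along the chain $C = (C_0 = G, C_1, \ldots, C_m)$, identifying the extension tower produced by Algorithm~\ref{tower} with a tower of fixed fields via the Galois correspondence. Throughout, I would use Theorem~\ref{th_next} as a black box: it guarantees that the polynomial $g$ constructed in Step~\ref{resolve} is exactly the resolvent $\prod_{\tau \in T_k} (x - I_k^{\tau}(T(r_1), \ldots, T(r_n))) \in K[x]$ over the current top field $K$.

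The inductive hypothesis I would maintain is that after processing $C_0, \ldots, C_k$, the current top field $K_k$ of the tower is the fixed field $F_{C_k}$ of $C_k$ inside $S_f$, with $[K_k : F] = [G : C_k]$. The base case $K_0 = F = F_G$ is trivial. For the inductive step, by Theorem~\ref{th_next} the defining polynomial $g$ adjoined to $K_{k-1} = F_{C_{k-1}}$ has roots $I_k^{\tau}(T(r_1), \ldots, T(r_n))$ indexed by $\tau \in C_{k-1} // C_k$. Setting $\beta = I_k(T(r_1), \ldots, T(r_n))$, the fact that $I_k$ is $C_k$-invariant forces $\beta \in F_{C_k}$, while the fact that $I_k$ is $C_{k-1}$-relative (so $\stab_{C_{k-1}} I_k = C_k$) forces no strictly larger subgroup of $C_{k-1}$ to fix $\beta$. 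This is the same relative-invariant argument used in the proof of Theorem~\ref{th_fixed}, now applied one level up the tower: $\beta$ generates $F_{C_k}$ over $F_{C_{k-1}}$ and $g$ is its relative minimal polynomial, so $K_{k-1}[x]/g(x) \cong F_{C_k}$ with $[K_k : K_{k-1}] = [C_{k-1} : C_k]$. Telescoping the indices then gives $[K_k : F] = [G : C_k]$.

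When $C_m = \{1\}$, the Galois correspondence yields $F_{C_m} = S_f$, so $K_m$ is a splitting field for $f$. Its absolute degree is $\#G$, matching the minimum possible degree of a splitting field (the Galois group acts faithfully and transitively on roots, so any splitting field has degree divisible by $\#G$, and equality is achieved by $S_f$ itself). For Algorithm~\ref{split}, I would verify that the chain it constructs does end at the trivial subgroup: starting from $e = \emptyset$ with $\stab_G(e) = G$, each iteration appends one $d_i$ and the new group is $\stab_G(e \cup \{d_i\})$, chosen to be smallest. Once $e = \{1, \ldots, n\}$, the pointwise stabiliser is trivial because $G \subseteq S_n$ acts faithfully on the roots of $f$. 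At each step the invariant $I_k = x_{d_i}$ is manifestly $C_{k-1}$-relative $C_k$-invariant whenever $C_k \subsetneq C_{k-1}$, so Algorithm~\ref{tower} applies. Combining with the previous paragraph, Algorithm~\ref{split} delivers a tower of absolute degree $\#G$, which is minimal.

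The main obstacle, and where the argument requires the most care, is the irreducibility of $g$ over $K_{k-1}$ in the inductive step, equivalently the claim that $\beta$ generates $F_{C_k}$ over $F_{C_{k-1}}$ rather than a proper intermediate subfield. Any proper intermediate subfield would correspond, via the Galois correspondence applied inside the extension $F_{C_k}/F_{C_{k-1}}$, to a strictly intermediate subgroup $C_k \subsetneq H \subseteq C_{k-1}$ fixing $\beta$; but then $I_k$ would be $H$-invariant, contradicting $\stab_{C_{k-1}} I_k = C_k$ since $I_k$ is $C_{k-1}$-relative $C_k$-invariant. After this, the remaining work is standard degree counting and an appeal to Theorem~\ref{th_next} for correctness of each individual resolvent computation; in particular the bounds and precision discussion of Section~\ref{bounds} ensures that the recovered polynomial coefficients in $K_{k-1}$ are exact.
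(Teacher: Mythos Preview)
Your proposal is correct and follows essentially the same approach as the paper's own proof: both use Theorem~\ref{th_next} as a black box, invoke the relative-invariant argument of Theorem~\ref{th_fixed} at each level to show the resolvent generates exactly the fixed field of the next subgroup, and then appeal to the Galois correspondence to identify the final field with $S_f$ when the chain terminates at $\{1\}$. Your write-up is in fact more explicit than the paper's, spelling out the inductive hypothesis carefully and verifying that the stabiliser chain built in Algorithm~\ref{split} actually reaches the trivial group; the paper simply asserts the degree telescopes to $\#G$ and that this is minimal.
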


\begin{proof}
The $k$th extension will have degree $\#T_k$ as in Theorem~\ref{th_fixed}, as
$I$ being a $C_k$-relative $C_{k+1}$-invariant guarantees the roots of $g$
are not in a subfield of $K[x]/g$,
so the field extension computed by Algorithm~\ref{tower} will have degree
$$\prod_k \#T_k = \#G/\#C_1 \times \cdots \times \#C_l/1 = \#G$$ over $F$,
the same degree as a splitting field.
Let $S_f$ be a splitting field for $f$ over $F$ so that $G$ is the Galois group
of $S_f/F$.

By Theorem~\ref{th_next} the polynomials computed by Algorithm~\ref{next} are
the resolvent polynomials needed for defining the extensions in the tower.

By the Galois correspondence,
each subgroup $C_k$ in the chain is the Galois group of $S_f$ as an extension 
of the fixed field of $C_k$.
We know that the fixed field of $C_1$ is given by such a resolvent polynomial
$g$ by Theorem~\ref{th_fixed}. Since $C_{k+1} \subset C_k$ and $C_k$ is the Galois
group over the fixed field of $C_k$
we can compute the fixed field of $C_{k+1}$ similarly as an extension of
the fixed field of $C_k$.
Finally we compute the fixed field of $\{1\}$ as an extension of the fixed
field of $C_l$ but the fixed field of the trivial group is $S_f$. Therefore,
Algorithm~\ref{tower} computes a splitting field of $f$.

Algorithm~\ref{split} returns a splitting field of degree the order of the
Galois group so this splitting field will have minimial degree.
\end{proof}

\begin{comment}
Newton's Relations or Newton's Identities convert
sums of powers of variables into elementary symmetric functions in those variables. We evaluate these at the root alpha and its conjugates so that Newton's
relations then give us elementary symmetric functions in what will be the roots
of the minimal polynomial of alpha.
\end{comment}

\hrule
\begin{example}
\label{eg_split}
Continuing Example~\ref{eg_fixed_split},
below the splitting field is computed as a tower of extensions over 
$K$ using Algorithm~\ref{split} above. 
We see the field defined by the input polynomial as well as the 
further extensions required to find the splitting field.
While the time taken is twice more than in Example~\ref{eg_fixed_split} it
is also one seventh of Example~\ref{eg_fact}.

\begin{verbatim}
> time GSF := GaloisSplittingField(f : Roots := false);
Time: 0.290
> Fqta<aa> := CoefficientField(GSF);
> _<y> := PolynomialRing(Fqta);
> GSF:Maximal;
  GSF
   |   y^2 + (62*t/(t^3 + 2*t^2 + 4*t + 2)*aa^5 + (2*t + 2)/
   |   (t^3 + 2*t^2 + 4*t + 2)*aa^4 + 29*t^2/(t^3 + 2*t^2
   |   + 4*t + 2)*aa^3 + (50*t^2 + 50*t)/(t^3 + 2*t^2 + 4*t
   |   + 2)*aa^2 + (8*t^3 + 4*t^2 + 8*t + 4)/(t^3 + 2*t^2 +
   |   4*t + 2)*aa + (92*t^3 + 92*t^2)/(t^3 + 2*t^2 + 4*t +
   |   2))*y + (2*t + 2)/(t^3 + 2*t^2 + 4*t + 2)*aa^5 + ....
Fqta<aa>
   |         x^6 + 98*t*x^4 + (2*t + 2)*x^3 + 3*t^2*x^2 +
   |            (6*t^2 + 6*t)*x + 100*t^3 + t^2 + 2*t + 1
Univariate rational function field over GF(101)
\end{verbatim}
\hrule
\end{example}

\section{Solution of polynomials by radicals}
\label{sect_radical}

For polynomials with solvable Galois groups
we can also construct a tower consisting of radical extensions in which the polynomial splits.

In order to check whether a polynomial can be solved by radicals, we first 
compute the Galois group and check it is solvable. 
If so, since a solvable finite group is a group with a 
composition series $C$ all of whose quotients are cyclic groups of prime order,
we use this series $C$ of subgroups in Algorithm~\ref{tower}
to compute a tower 
of cyclic extensions which we can convert to radical extensions. This 
conversion requires handling the necessary roots of unity. 
We now detail the steps of this algorithm.

\begin{algo}[A tower of radical extensions in which a polynomial splits]
\label{radical_algo}
Given a polynomial $f$ of degree $n$ over $F = \Q, \Q(\alpha)$ or $\F_q(t)$ 
compute a tower of radical extensions of $F$, 
of minimal degree 
over which $f$ splits using the Galois group of $f$.
\begin{enumerate}
\item Choose a prime good for the computation of the Galois group of
the product of $f$ and the cyclotomic polynomials for $m$th roots of unity
for $m < n$.
\item Compute $\Gal(f)$ and check it is solvable. If not return.
\item\label{unity} Determine which roots of unity are needed and compute the Galois group $G$
of the LCM of $f$ and the associated cyclotomic polynomials.
\item Compute a chain $C$ of subgroups, starting with $G$, then those which
stabilize an increasing number of roots of unity and ending with 
the composition series of the current last subgroup in the chain $C$.
\item\label{step_tower} Using Algorithm~\ref{tower}, compute the tower of 
cyclic fields from the LCM of
$f$ and the cyclotomic polynomials, $C$ and invariants either computed 
using~\cite{fieker_klueners,suth_galois_global} or indeterminates corresponding 
to the roots of unity.

\item\label{cyc_rad_step} Transform cyclic extensions to radical extensions using Algorithm~\ref{cyclic_radical}.
\end{enumerate}

\end{algo}

Note that Step~\ref{unity} requires the computation of a Galois group of a reducible
polynomial which has been discussed in~\cite[Section 4]{suth_galois_global}.
The computation of a good prime for this computation of a Galois group is also
described in that section.
It is for this reducible polynomial that the tower of radical extensions will be
a splitting field.

To determine which roots of unity are necessary in Step~\ref{unity} 
we consider the $p$-th roots of unity
for those primes $p$ dividing the order of $G$. We also consider the $q$th roots
of unity for those primes $q$ dividing the $p-1$ and the $r$th roots of unity for
those primes $r$ dividing the $q-1$ and so on. We compute GCDs of the cyclotomic
polynomials for these roots of unity with $f$ 
and so remove any roots of unity from the cyclotomic polynomials which are 
already roots of $f$.

%But these cyclotomic polynomials caused some problems in the transformation
%in Step 5.
For function fields in any 
characteristic the cyclotomic polynomials which are used to handle adjoining
necessary roots of unity define constant field extensions so instead of using
the extension $\F_q(t)(\alpha)$ where $\alpha$ is a root of a polynomial
of degree $r$ over $\F_q$ we can instead use the field $\F_{q^r}(t)\cong \F_q(t)(\alpha)$.
%
\begin{comment}
\begin{alignat*}{5}
&F& \qquad\cong\qquad &\F_{q^r}(t) \\
\text{degree } r, \text{ no $t$ in defining polynomial}&\:\Biggr|& \qquad\qquad & \Biggr|\\
&\F_q(t)&  \qquad\qquad & |\\
&\:|& & \F_{q^r} \\
&\:\Biggr|& \qquad\qquad & \Biggr|\\
&\F_q & & \F_q 
\end{alignat*}

This means that
instead of extending the function fields by roots of these polynomials we can
extend the finite field (or rational field) by these polynomials and construct
a rational function field on top of this extension. 
Handling the problem in this way helped the computation of an Automorphism 
which currently requires the 
constant field to be algebraically closed in the extension. 
\end{comment}
Making such extensions of the constant field instead of the function field
makes the function
field extensions smaller and improves the efficiency of Algorithm~\ref{radical_algo} and any use of the field afterwards.
%and so cheaper whilst providing the necessary
%algebraically closed constant field for the computation of an Automorphism.

It is also possible for global function fields
that the cyclotomic polynomials are reducible
over the constant field as the constant field already contained some of those roots
of unity, unlike the rational field which only contains 2nd roots of unity.
Roots of unity which are already included in the constant field do not need 
to be adjoined.
%Char $p$ probs : $p$-th roots of unity, some roots of unity may be in the constant field, Cyclotomic polynomials may be reducible, Need Automorphisms to map $K$.1 to convert a cyclic extension to a radical extension so need an exact constant field in the unknown splitting field.

Note that we could just use the composition series of $G$ to compute a tower of 
radical extensions. However, by computing subgroups which stabilise the roots
of unity we put these extensions at the base of the tower and for function
fields this simplifies the construction of these constant field extensions.

We now state the algorithm we use for Step~\ref{cyc_rad_step}.

\begin{algo}[Convert a cyclic extension to a radical extension]
\label{cyclic_radical}
Given a cyclic extension $K'/K$ of prime degree $d$ compute an isomorphic radical extension.
\begin{enumerate}
\item Compute a generator $\sigma$ for the automorphism group of $K'/K$.
\item \begin{description}
\item[if $d$ is equal to the characteristic $p$ of $K$]
Return $$K[x]/(x^d - x - (b^p-b))$$ where
$$b = 1/\mathrm{Tr}(\theta)\sum_{i=1}^{p-1} i \sigma^i(\theta),
\exists \theta \in K', \mathrm{Tr}(\theta) \not= 0.$$

\item[if $d = 2$] Let $K'/K$ be defined by $x^2 + a_1 x + a_0$.
Return $$K[x]/(x^2+a_0-a_1^2/4).$$

\item[if $d > 2$]
Return $K[x]/(x^d - b^d)$ where $b = \sum_i \zeta^i \sigma^{(n-i)}(a) \in K'$,
$a\in K'$ such that $b\not= 0$ and $\zeta$ is a $d$th root of unity.
\end{description}
\end{enumerate}
\end{algo}

There are algorithms available for computing automorphisms of number 
fields~\cite{AcKl1, Kl2} and function fields~\cite{antshessautiso}. While computing automorphisms of
large towers of extensions of number fields appears to be efficient for 
the extensions $K'/K$ 
encountered in computing a tower of radical extensions, the 
computation of automorphisms of large towers of radical 
function field extensions appeared prohibitive.
From the Composition series and the Galois correspondence 
the Galois group of $K'/K$ is a cyclic group of order $d$ which is prime.
Such groups can be generated by each
%
%[SIMPLE GROUP: the automorphism group is a cyclic group of prime???? order, any auto will generate
%the group so can take any automorphism, that is any root of the cyclic but non-radical polynomial.]
%
of their non identity elements so any
non identity automorphism can be used as the generating automorphism $\sigma$
in the calculation of a radical generator.
The cost of this though is the computation of a root of the defining polynomial
of $K'$ in $K'$ as we need an image under the chosen generating automorphism $\sigma$.

\begin{theorem}
Algorithm~\ref{radical_algo} computes a tower of radical extensions of minimal 
degree over which the polynomial $f$ splits.
\end{theorem}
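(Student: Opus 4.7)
The plan is to split the correctness into three sub-claims: (i) Step~\ref{step_tower} produces a splitting field for the LCM $L$ of $f$ and the cyclotomic polynomials chosen in Step~\ref{unity}; (ii) each relative extension in the resulting tower is cyclic of prime degree with a primitive root of unity of that degree already in the base; and (iii) Algorithm~\ref{cyclic_radical} converts such an extension to an isomorphic radical extension. Minimality is then handled separately at the end.

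For (i), I would appeal directly to Theorem~\ref{th_split}. Solvability of $\Gal(f)$ implies that the composition series of $G = \Gal(L)$ terminates in the trivial subgroup, so the chain $C$ constructed in Step~4 ends at $\{1\}$, exactly matching the hypothesis of Theorem~\ref{th_split}. Hence Step~\ref{step_tower} returns a splitting field for $L$, over which $f$ \emph{a fortiori} splits. For (ii), the ordering of $C$ is essential: the subgroups stabilising roots of unity are placed at the top of $C$, so by the time the composition-series portion of the chain begins, the current base of the tower already contains every root of unity selected in Step~\ref{unity}. Each composition-series step has prime cyclic quotient, so the Galois correspondence makes the corresponding relative extension cyclic of the same prime degree.

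For (iii), each case of Algorithm~\ref{cyclic_radical} is justified by classical theory: Artin--Schreier theory when $d$ equals the characteristic of $K$, completing the square when $d = 2$, and Lagrange resolvents (Kummer theory) when $d > 2$ and a primitive $d$th root of unity $\zeta$ is available in $K$. In the Kummer case, a direct computation gives $\sigma(b) = \zeta^{-1} b$, whence $b^d$ is fixed by $\sigma$ and so lies in $K$; provided $a$ is chosen with $b \ne 0$, the element $b$ generates $K'$ over $K$ with radical minimal polynomial $x^d - b^d$. The Artin--Schreier and degree-$2$ cases reduce to the identity $(x-a_1/2)^2 = x^2 - a_1 x + a_1^2/4$ and to the standard isomorphism $K[x]/(x^p - x - c) \cong K[y]/(y^p - y - (b^p - b))$, both routine.

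The main obstacle is the minimality claim. The argument I would use is that any radical tower over $F$ in which $f$ splits must itself contain all the roots of unity selected in Step~\ref{unity}: constructing a cyclic-radical extension of prime degree $d > 2$ requires a primitive $d$th root of unity in the preceding base, and this requirement cascades through the primes dividing $\#\Gal(f)$ exactly as described just after Algorithm~\ref{radical_algo}. Consequently any radical tower over $F$ that splits $f$ is itself a splitting field for $L$ and so has degree at least $\#G$. The output of Algorithm~\ref{radical_algo} attains this bound by Theorem~\ref{th_split}, establishing minimality among radical towers.
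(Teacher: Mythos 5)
Your decomposition is essentially the same as the paper's: use Theorem~\ref{th_split} for correctness of the tower, argue that all relative extensions are prime cyclic so that Algorithm~\ref{cyclic_radical} applies, justify each case of Algorithm~\ref{cyclic_radical} (Artin--Schreier, completing the square, Lagrange resolvent), and establish that the roots of unity adjoined are exactly those forced on any radical tower splitting $f$. The minimality argument you give at the end (the cascade $q \mid \#\Gal(f) \Rightarrow \zeta_q \Rightarrow \zeta_r$ for $r \mid q-1 \Rightarrow \cdots$) is the same one the paper uses, just stated more explicitly as a lower bound rather than folded in.

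There is one genuine gap in step (ii). You assert that \emph{every} relative extension in the tower is cyclic of prime degree with the relevant root of unity already present, but your justification covers only the composition-series portion of $C$. The chain built in Step~4 of Algorithm~\ref{radical_algo} starts with a sequence of stabilizers of increasing sets of roots of unity, and the index between consecutive stabilizers is typically \emph{not} prime (e.g.\ adjoining $\zeta_q$ over $\Q$ has degree $q-1$). The paper fills this in by a separate argument: the stabilizers are normal in $G$ (since for $g\in G$ and $h$ fixing $z_i$, $g^{-1}hg$ also fixes $z_i$ because $g z_i = z_i^r$), each stabilizer quotient is soluble, so one can refine the stabilizer chain by pulling back a composition series of each quotient, giving normal subgroups of prime index throughout. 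Without some such refinement step your claim that every relative extension is prime cyclic is unsupported, and Algorithm~\ref{cyclic_radical} cannot be applied to the cyclotomic portion of the tower. You should either add this refinement argument or point out explicitly where in Algorithm~\ref{radical_algo} the refinement happens.
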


\begin{proof}
From Theorem~\ref{th_split} we have that Algorithm~\ref{radical_algo} 
Step~\ref{step_tower} computes
a splitting field for $f \prod_j c_{p_j}$ where $c_{p_j}$ are cyclotomic 
polynomials whose roots are the primitive $p_j$th roots of unity. 

If a prime $q$ divides the order of $\Gal(f)$ then there will be an extension
defined by $x^q - a$ for some $a$ appearing in a tower of radical extensions
over which $f$ splits. The roots of this defining polynomial in the splitting
field will be $\{ \zeta_q^i a^{1/q} \}_{0 \le i < q}$ 
and so we need to include $\zeta_q$
in the radical tower to ensure this tower is a splitting field.
However, the minimal polynomial of $\zeta_q$ is a cyclotomic polynomial of
degree $\phi(q) = q-1$ so we also will have extensions defined by $x^r - b_r$
in the radical tower for $r \mid q - 1$ and some $b_r$. Hence we must 
include such $\zeta_r$ and so on.
For primes which do not divide $\#\Gal(f)$ or any such $q-1$, there will be no
extension of this prime degree in the radical tower and such roots of unity
are not necessary.

%DON
%The maximal subgroups of a finite soluble group have prime power index
%Finite Soluble Groups Prop 10.3 Klaus Doerk and Trevor Hawkes
%p51 of this book uses that 
%If S != 1 and S is soluble, then a maximal normal subgroup  of S has
%prime index in S; 
%but doesn't give a reference for it
%Because this is obvious!!!
In order to ensure that all quotients of consecutive groups in the chain
$C$ are of prime order, we contruct a composition series of $\Gal(f)$
which contains $S$, the stabilizer for a subfield generated by roots
of unity in the splitting field. Since $\Gal(f)$ is solvable and 
$S \subset \Gal(f)$, $S$ is solvable~\cite[Theorem 13.2(1)]{stewart_galois}.   
 
Since the subfield is abelian, it is normal and so is its stabilizer,
therefore, $\Gal(f)/S$ is solvable~\cite[Theorem
13.2(2)]{stewart_galois}. Since $S$ and $\Gal(f)/S$ are solvable,
$\Gal(f)$ is solvable~\cite[Theorem 13.2(3)]{stewart_galois} 
(as already known) and further 
there is a composition series for $\Gal(f)$ containing $S$.
Therefore all such stabilizers can be
included in a composition series for $\Gal(f)$.

\begin{comment}
IF the index of a subgroup in the chain in its 
predecessor is not prime but a PRIME POWER then the quotient of these groups
is a finite $p$-group and will have a series of normal subgroups with each index
of consecutive groups equal to $p$~\cite[Lemma 13.10]{stewart_galois}. 
Taking the preimage of the groups in this series under the quotient map gives ... ?
\end{comment}

Since the chain of subgroups passed in to Algorithm~\ref{tower} consists of
groups whose quotients with consecutive groups are cyclic of prime order from 
the composition series and as above from the roots of unity, 
the field extensions in the tower will be cyclic of prime degree.
%We therefore need to show that the extensions which
%comprise this splitting field are made radical by Algorithm~\ref{cyclic_radical}.
\begin{comment}
Since the quotients of groups in the chain are
cyclic of prime order the field extensions in the tower returned by 
Algorithm~\ref{tower} will also be cyclic of prime degree.
\end{comment}

To see that Algorithm~\ref{cyclic_radical} returns a radical extension 
isomorphic to a given cyclic extension,
\begin{description}
\item[if $d$ is equal to the characteristic $p$ of $K$]
For degree $p$ extensions in characteristic $p$, 
a wider definition of solvability by radicals can be 
used~\cite[p 129]{stewart_galois}.
Essentially instead of computing a Kummer generator we compute 
an Artin--Schreier generator,
a root of an Artin--Schreier polynomial~\cite[Remark p 147]{stewart_galois}. 
Artin--Schreier generators exist in the same way Kummer generators 
exist~\cite{stichtenoth}.
In characteristic $p$ we have 
\begin{itemize}
\item $x^p-a$ is inseparable. It only has one distinct root, not $p$ roots, and
its derivative is zero.
\item In a cyclic degree $p$ extension $K'/K$ there exists $\beta$ such that
$\beta^p-\beta \in K$~\cite[A13]{stichtenoth}.
In this case $x^p - x - a = \prod_{i=1}^{p-1} (x - (\beta + i))$ defines an Artin--Schreier extension.
\end{itemize}
From~\cite[Theorems 6.3 and 6.4]{lang_algebra},
$\beta = 1/\mathrm{Tr}(\theta)\sum_{i=1}^{p-1} i \sigma^i(\theta)$,
$\exists\theta \in K', \mathrm{Tr}(\theta) \not= 0$
is a primitive element such that $b^p - b \in K$.

\item[if $d = 2$] 
Completing the square reveals that $\alpha + a_1/2$
is a zero of $x^2+a_0-a_1^2/4$. % and return $K[x]/(x^2+a_0-a_1^2/4)$.

\item[if $d > 2$]
From~\cite{waerden_modern}, $b = \sum_i \zeta^i \sigma^{(n-i)}(a) \in K'$,
where $\zeta$ is a $d$th root of unity,
is a primitive element such that $b^d \in K$.
\end{description}

\begin{comment}
Galois groups of cyclotomic extensions are abelian and so products of cyclic
groups of prime order. Cyclotomic extensions are splitting fields.
Galois group isomorphic to $\Z/n\Z$ under multiplication for $n$th roots
of unity which may not be cyclic when $n-1$ is not prime.
When $n$ is prime the $n$ roots of unity form a cyclic group. So for prime
$n$th roots which is what we are using the cyclotomic extensions are cyclic.
The Galois groups themselves are not in the chain, they are quotients of
groups in the chain.

Possible for Degree(ff[i]) ne EulerPhi(n[i]) where ff contains cyclotomic 
polynomials so n[i] not always prime? How is this so for cyclotomic ff? How is there non cyclotomic ff?? NOT THE POINT HERE! Taking GCD may reduce the 
degree of the cyclotomic polynomial

all abelian simple groups are cyclic groups of prime order.

all groups of prime order are cyclic. Cyclic groups of prime order are simple
(no subgroups so no normal subgroups)

CyclotomicPolynomial(prime) is of degree prime - 1 which is usually even
and so not prime.

Algorithm~\ref{cyclic_radical} certainly returns a radical extension.
Does it really need cyclic input as I'm not sure it gets it for the roots 
of unity? It would need it for step 1 in getting the automorphism generating
the automorphism group.

Are all these field degrees prime? This would require the orders of the 
quotients to be prime not the group orders themselves. YES
\end{comment}

\end{proof}

\hrule
\begin{example}
Below is a degree 6 polynomial which can be solved by radicals, many cannot be. 
Computing the {\tt GaloisSplittingField} of this polynomial as shown in
Example~\ref{eg_split} above results in a degree 2
extension of the degree 6 extension defined by this polynomial. Solving by
radicals results in this splitting field being expressed as 3 extensions, 
the lowest one being a constant field extension hidden in $\F_{101^2}$.
The difference comes from the chain of subgroups used --- in this case using 
the composition series
we used more subgroups and so there are more extensions.

\begin{verbatim}
> Fqt<t> := FunctionField(GF(101));
> P<x> := PolynomialRing(Fqt);     
> f := x^6 + 98*t*x^4 + (2*t + 2)*x^3 + 3*t^2*x^2 +
>           (6*t^2 + 6*t)*x + 100*t^3 + t^2 + 2*t + 1;
> S := SolveByRadicals(f);                                   
> CS<cs> := CoefficientRing(S);
> _<t> := CoefficientRing(CS);
> S:Maximal;
    S
    |
    | $.1^2 + 100*t
    |
  CS<cs>
    |
    | $.1^3 + 8*t + 8
    |
Univariate rational function field over GF(101^2)
Variables: t
> DefiningPolynomial(ConstantField(S));
t^2 + 26
> _<w> := ConstantField(S);
> Roots(Polynomial(S, f));
[
    <S.1 + (51*w + 25)*cs, 1>,
    <100*S.1 + (51*w + 25)*cs, 1>,
    <S.1 + (50*w + 25)*cs, 1>,
    <100*S.1 + (50*w + 25)*cs, 1>,
    <S.1 + 51*cs, 1>,
    <100*S.1 + 51*cs, 1>
]
\end{verbatim}
\hrule
\end{example}
\begin{example}
{An example with a degree characteristic extension}
\begin{verbatim}
> F<t> := FunctionField(GF(5));
> P<x> := PolynomialRing(F);   
> f := x^5 - x + t;            
> GaloisGroup(f);           
Permutation group acting on a set of cardinality 5
Order = 5
    (1, 2, 5, 4, 3)
> IsSoluble($1);
true
> SolveByRadicals(f);
Algebraic function field defined over Univariate rational 
function field over GF(5) by x^5 + 4*x + t
\end{verbatim}
\hrule
\end{example}
\begin{example}
An interesting example with a degree characteristic extension.
\begin{verbatim}
> f := x^5 + x^4 + t; 
> G := GaloisGroup(f); 
> TransitiveGroupDescription(G); 
F(5) = 5:4                       
> IsSoluble(G);
true
> S := SolveByRadicals(f); 
> CS<cs> := CoefficientRing(S);
> CCS<ccs> := CoefficientRing(CS); 
> S:Maximal;
     S
     |
     |                      $.1^5 + 4*$.1 + 2/t^2*ccs*cs
   CS<cs>
     |
     |                                     $.1^2 + 2*ccs
  CCS<ccs>
     |
     |                                       x^2 + 4*t^3
Univariate rational function field over GF(5)
Variables: t
> Roots(f, S);
[   <S.1^4 + S.1^3 + S.1^2 + S.1, 1>,
    <S.1^4 + 3*S.1^3 + 4*S.1^2 + 2*S.1, 1>,
    <S.1^4 + 2*S.1^3 + 4*S.1^2 + 3*S.1, 1>,
    <S.1^4 + 4*S.1^3 + S.1^2 + 4*S.1, 1>,
    <S.1^4 + 4, 1>
]
> Roots(f, FunctionField(f));
[
    <$.1, 1>
]
\end{verbatim}
From the roots we can see that $f$ does not split over $\F_5(t)[x]/f$.
\hrule
\end{example}

\section{Geometric Galois groups}
\label{sect_geom}
Geometric Galois groups have connections
to inverse Galois theory and are an alternate approach to computing absolute 
factorizations.
They have also already been used computationally in solving a Japanese
geometry problem~\cite{morikawa}.

We present below an algorithm we have developed to compute geometric
Galois groups of polynomials $f\in \Q(t)[x]$.
Further we consider the use
of this algorithm to compute the absolute factorization of polynomials 
$f\in \Q(t)[x]$. %WHAT ABOUT $f$ OVER OTHER FIELDS? Can't get GalGrp
%We note that this can be generalised to $f\in F(t)[x]$ where $F$ is a finite
%algebraic extension of $\Q$.
This algorithm has been available in {\sc Magma}~\cite{magma223} since V2.24.

First we make the necessary definitions.

\begin{definition}
%The {\em geometric Galois group} of a polynomial $f \in \Q(\alpha)(t)[x]$, $\GGal(f)$, is
The {\em geometric Galois group}, $\GGal(f)$, of a polynomial $f \in \Q(t)[x]$ is
the Galois group of $f$ considered as a polynomial over $\C(t)$, $\Gal(f/\C(t))$. 
\end{definition}

\begin{definition}
Given a function field 
$F/k(t)$, the algebraic closure of $k$ in $F$, 
$K = \{z : z \in F \ |\ \text{$z$ is algebraic over $k$}\}$,
is the {\em full} or {\em exact constant field} of $F$.
\end{definition}

\subsection{Connections to Inverse Galois Theory}
Computations of geometric Galois groups have links to Inverse Galois Theory.
We summarise below the relevant information from~\cite[Chapter I]{malle_matzat} to which the 
references below refer.
%EXPAND ON THIS.

\begin{itemize}
\item
Every finite group occurs as a Galois group over $\C(t)$ (Corollary 1.5)
and as a geometric Galois group over $\R(t)$ (Corollary 1.7) and 
$\bar \Q(t)$ (Corollary 2.3) so that the inverse Galois problem is solved over all
rational function fields $\bar k(t)$ of characteristic 0.

\item
A finite extension $\Gamma/k(t)$ is {\em geometric} if $k$ is algebraically closed in 
$\Gamma$ (Section 1).
This occurs when $k$ is the full constant field of $\Gamma$.

\item
\begin{comment}
    For brevity, let us call a realization of a group H as Galois group
Gal(N/K) a G-realization of Hover k (in r variables), if it satisfies the
property

(G): N / K is a geometric Galois extension with Galois group H over a rational
      function field K / k (of transcendence degree r).
\end{comment}
If $H$ is a Galois group $H = \Gal(\Gamma/k(t))$ then this realization is a 
{\em $G$-realization of $H$ over $k$} if
$\Gamma/k(t)$ is a geometric Galois extension with Galois group $H$ over a rational function
field $k(t)$ (Section 5.1).
\end{itemize}

Hence any $\Gamma/\C(t)$ is a geometric extension and the geometric Galois group
$\Gal(\Gamma/\C(t))$ is a $G$-realization over $\C$. 
%HOW DOES THIS WORK WITH $k = \Q$???
Since every finite abelian group has a $G$-realization over $\Q$ (Theorem 5.1),
there are geometric extensions $\Gamma/\Q(t)$ whose geometric Galois
groups are equal to $\Gal(\Gamma/\Q(t))$.

%The computation of a geometric Galois group can be used to verify, or otherwise,
%candidate polynomials computed from a given group.
%Apparently they do not need verification

\subsection{Previous Work}
Numerous polynomials whose geometric Galois groups are equal to their
Galois groups are given in~\cite{malle_matzat}. This collection was
provided to us in {\sc Magma} readable form by J\"urgen Kl\"uners and has since been
since extended by Kl\"uners to include polynomials whose geometric Galois
groups differ from their Galois groups.

\subsection{Algorithm}
Since $\Q$ is a subfield of $\C$, $\GGal(f)$
will be a subgroup of $\Gal(f/\Q(t))$
--- as the field $\C$ is larger than $\Q$ the group is smaller.
Therefore again start with computing a Galois group $G$ --- this time over $\Q(t)$.
Using Hilbert's Irreducibility Theorem we can find $t_1, t_2$ at which the 
specializations of the polynomial have Galois
group isomorphic to $G$. Then compute the Galois group over $\Q$ of the
product of the specializations, a subgroup of the direct product of
$\Gal(f/\Q(t))$ with itself. 

Using inexact fields such as $\C$ often leads to 
problems with precision, so we avoid computing in $\C$ altogether and
instead find the largest algebraic extension
of $\Q$ which contains the algebraic numbers we require : the algebraic closure
$K$ of $\Q$ in the splitting field $S_f/\Q(t)$ which is the exact constant field
of $S_f$. 
All the constants of $S_f/\Q(t)$ will be in $K$ since $K = S_f \cap \C$.
%To avoid problems with precision in $\C$ 
%we instead compute over the largest algebraic extension
%of $\Q$ which contains the algebraic numbers we require, the algebraic closure
%of $\Q$ in the splitting field $\Gamma$ of $f$. This field 
%$K$ of constants of $\Gamma$
%is a finite extension of $\Q$. 
%
%Since $\Gal(f/\C(t)) = \Gal(f/K(t))$,
We compute the geometric Galois group of $f$
as the Galois group of $f$ considered as a polynomial over $K(t)$,
although computations over $K(t)$ are also unnecessary and $K$ is not known
before hand.
%Algorithm~\ref{algo_ggg} could be much simplified to compute the Galois group of
%$f\in \Q(t)[x]$ as a polynomial over $k(t)$ for any extension $k$ of $\Q$.
%Given that in this case we know the constant field we are looking for we do not
The task then is to determine $K$, not knowing $S_f$ and which normal subgroup $X$ of $\Gal(f)$ fixes $K(t)$.
We compute $K$ at the same time as the geometric Galois group.

There are some bounds on the index of the geometric Galois group in $\Gal(f/\Q(t))$.
Also, since the degree of the fixed field of the geometric Galois group, $K(t)$,
is divisible by the degree of the exact constant field of $\Q(t)[x]/f$,
the order of the geometric Galois group must divide the quotient of $\#G$ by 
the degree of the exact constant field.

We need only consider the normal subgroups which satisfy these constraints, 
compute their fixed fields and look at which of these 
is a rational function field over an extension of the constant field. This
gives subgroups of $G$ which contain
the geometric Galois group and the coefficient rings of 
their fixed fields which are contained in 
the algebraic closure of $\Q$ in the splitting field of $f$ over $\Q(t)$.

We summarise the algorithm as follows and then make some notes : 

\begin{algo}[Compute the geometric Galois group of $f\in \Q(t)\lbrack x\rbrack$]\hfill\par\noindent
Given a polynomial $f \in \Q(t)[x]$ compute $\GGal(f)$
as well as the algebraic closure $K$ of $\Q$ in $S_f$.
\label{algo_ggg}
\begin{enumerate}
\item Compute $G = \Gal(f)$.
\item Specialise $t$ at small integer values. Choose $t = t_i, i = 1, 2$ 
such that $\Gal(f(t_i, x)) = G$.

\item Compute $H = \Gal(f(t_1, x) f(t_2, x))$. 
%the Galois group of a reducible polynomial
%over $\Q$. This will be a subgroup $H$ of $\Gal(f) \times \Gal(f)$ which 
%projects to $G$. 
%The index of $H$ in $G\times G$ is an upper bound for the degree of $K$, 
%the fixed field of $\GGal(f)$.

\item For normal subgroups $X$ of $G$ having index less than $[G\times G : H]$ 
and index divisible by %$\#X \mid \#G/c$
$c$ where $c$ is the degree of the full constant field of $\Q(t)[x]/f$, 
\begin{enumerate}
\item[(a)] 
Compute the defining polynomial of the field $K'$ fixed by $X$ using 
Algorithm~\ref{fixed}. 

\item[(b)]
Check whether this is a polynomial over $\Q$ or whether this
defines a constant field extension.
%the constant field of $\Gamma$ using existing code. 
If so $X$ contains $\GGal(f)$ and $K \supseteq K'$.
\end{enumerate}

\item The subgroup $X$ containing $\GGal(f)$
with the largest index in $G$ and smallest order
corresponds to the largest constant field extension in $S_f$ which is the
algebraic closure $K$ of $\Q$ in $S_f$,
and is $\GGal(f)$.
\end{enumerate}
\end{algo}

Let $X = \Gal(S_f/K(t))$.
The fields and groups we are considering are shown in the following diagram : 

\setlength{\unitlength}{3947sp}%
\begingroup\makeatletter\ifx\SetFigFont\undefined%
\gdef\SetFigFont#1#2#3#4#5{%
  \reset@font\fontsize{#1}{#2pt}%
  \fontfamily{#3}\fontseries{#4}\fontshape{#5}%
  \selectfont}%
\fi\endgroup%
\begin{picture}(3180,2000)(2311,-2000)
\thinlines
{\put(2600,-511){\line( 0,-1){600}}
}%
{\put(2600,-1411){\line( 0,-1){400}}
}%
{\put(3200,-300){\line( -1,0){200}}
}%
{\put(3200,-300){\line( 0,-1){1000}}
}%
{\put(3200,-1300){\line( -1,0){200}}
}%
{\put(4600,-511){\line( 0,-1){600}}
}%
{\put(4600,-1411){\line( 0,-1){100}}
}%
{\put(4600,-1761){\line( 0,-1){100}}
}%
{\put(5700,-1950){\line( -1,0){200}}
}%
{\put(5700,-1650){\line( 0,-1){300}}
}%
{\put(5700,-1650){\line( -1,0){60}}
}%
{\put(5450,-300){\line( -1,0){300}}
}%
{\put(5450,-300){\line( 0,-1){1000}}
}%
{\put(5450,-1300){\line( -1,0){300}}
}%
{\put(6685,-511){\line( 0,-1){300}}
}%
{\put(6685,-1161){\line( 0,-1){650}}
}%
{\put(7500,-300){\line( 0,-1){1650}}
}%
{\put(7500,-300){\line( -1,0){300}}
}%
{\put(7500,-1950){\line( -1,0){300}}
}%
\put(2300,-436){\makebox(0,0)[lb]{\smash{{\SetFigFont{12}{14.4}{\rmdefault}{\mddefault}{\updefault}{$S_f/\C(t)$}%
}}}}
\put(2505,-1336){\makebox(0,0)[lb]{\smash{{\SetFigFont{12}{14.4}{\rmdefault}{\mddefault}{\updefault}{$\C(t)$}%
}}}}
\put(2505,-2036){\makebox(0,0)[lb]{\smash{{\SetFigFont{12}{14.4}{\rmdefault}{\mddefault}{\updefault}{$\Q(t)$}%
}}}}
\put(4560,-436){\makebox(0,0)[lb]{\smash{{\SetFigFont{12}{14.4}{\rmdefault}{\mddefault}{\updefault}{$S_f$}%
}}}}
\put(4505,-1336){\makebox(0,0)[lb]{\smash{{\SetFigFont{12}{14.4}{\rmdefault}{\mddefault}{\updefault}{$K(t)$}%
}}}}
\put(3825,-1706){\makebox(0,0)[lb]{\smash{{\SetFigFont{12}{14.4}{\rmdefault}{\mddefault}{\updefault}{$(ECF(\Q(t)[x]/f))(t)$}%
}}}}
\put(4505,-2036){\makebox(0,0)[lb]{\smash{{\SetFigFont{12}{14.4}{\rmdefault}{\mddefault}{\updefault}{$\Q(t)$}%
}}}}
\put(6616,-436){\makebox(0,0)[lb]{\smash{{\SetFigFont{12}{14.4}{\rmdefault}{\mddefault}{\updefault}{${S_f}_i$}%
}}}}
\put(6616,-1046){\makebox(0,0)[lb]{\smash{{\SetFigFont{12}{14.4}{\rmdefault}{\mddefault}{\updefault}{$K_i$}%
}}}}
%\put(2726,-3336){\makebox(0,0)[lb]{\smash{{\SetFigFont{12}{14.4}{\rmdefault}{\mddefault}{\updefault}{$\Q(t)$}%
%}}}}
\put(6621,-2036){\makebox(0,0)[lb]{\smash{{\SetFigFont{12}{14.4}{\rmdefault}{\mddefault}{\updefault}{$\Q$}%
}}}}
%\put(3976,-2036){\makebox(0,0)[lb]{\smash{{\SetFigFont{12}{14.4}{\rmdefault}{\mddefault}{\updefault}{$\Q$}%
%}}}}
\put(3341,-936){\makebox(0,0)[lb]{\smash{{\SetFigFont{12}{14.4}{\rmdefault}{\mddefault}{\updefault}{$\GGal(f) $}%
}}}}
\put(5641,-936){\makebox(0,0)[lb]{\smash{{\SetFigFont{12}{14.4}{\rmdefault}{\mddefault}{\updefault}{$X$}%
}}}}
\put(5791,-1886){\makebox(0,0)[lb]{\smash{{\SetFigFont{12}{14.4}{\rmdefault}{\mddefault}{\updefault}{deg $c$}%
}}}}
\put(7650,-1200){\makebox(0,0)[lb]{\smash{{\SetFigFont{12}{14.4}{\rmdefault}{\mddefault}{\updefault}{$G$}%
}}}}
%\put(5001,-961){\makebox(0,0)[lb]{\smash{{\SetFigFont{12}{14.4}{\rmdefault}{\mddefault}{\updefault}{and $f(P'|P) = 1$}%
%}}}}
%\put(5001,-2026){\makebox(0,0)[lb]{\smash{{\SetFigFont{12}{14.4}{\rmdefault}{\mddefault}{\updefault}{and $e(P_T|P_Z) = 1$}%
%}}}}
%\put(5001,-1811){\makebox(0,0)[lb]{\smash{{\SetFigFont{12}{14.4}{\rmdefault}{\mddefault}{\updefault}{$f(P_T|P_Z) = f(P'|P) = [T:Z]$}%
%}}}}
%\put(5001,-2886){\makebox(0,0)[lb]{\smash{{\SetFigFont{12}{14.4}{\rmdefault}{\mddefault}{\updefault}{$e(P_Z|P) = f(P_Z|P) = 1$}%
%}}}}
\end{picture}%

\bigskip\noindent
where $K_i$ is the residue field of $S_{f}$ at $t-t_i$, 
defined by $f(t_i, x)$ since $0 = f(t, \alpha) \equiv f(t_i, \alpha)$ where
$\alpha$ is a root of $f(t, x)$,
and $K \subseteq K_i$ since all constants have valuation $0$ at all primes 
and so are in all residue fields. 
%$$K = \{ z \in S_f : z \text{ algebraic over } \Q \}$$
%$$K_i  = S_f \mathrm{ mod } (t - t_i) = \{ z \mathrm{mod} (t-t_i) : z \in S_f \} = \Q[x]/f(t_i, x)$$
%So we are looking for a normal subgroup $X \subseteq G$ such that $K(t)$ 
%is the fixed field of $X$.

We can narrow down the possible subgroups to consider by
deriving constraints on the index of $\GGal(f)$ in $\Gal(f)$.
%We use a known divisor of the index and an upper bound on
%$[K:\Q]$ (which is equal to this index).
%
Using Hilbert's Irreducibility Theorem~\cite{hilbert_lang} 
we have
$$G =\Gal(f/\Q(t)) = \Gal(S_f) = \Gal(f(t_i, x)/\Q) = \Gal(S_{f_i})$$
for infinitely many $t_i\in \Q$. 
Choose $t_1, t_2$ % \in \Z$ 
such that $G = \Gal(f(t_1, x)) = \Gal(f(t_2, x))$.
%Let $\Gamma$ be the splitting field of $f$ over $\Q(t)$, $\Gamma_i$ the
%splitting field of $f(t_i, x)$ over $\Q$.
%
Let 
$H = \Gal(f(t_1, x) f(t_2, x))$, the Galois group of a
reducible polynomial over $\Q$. As computed by~\cite[Section 4]{suth_galois_global}
$H\subseteq G\times G$ and since 
$\#(G\times G) = \deg(S_{f_1})\deg(S_{f_2}) = \deg(S_f)^2$ and
$\#H = \deg(S_{f_1 f_2})$ and the difference in these degrees is determined by
the intersection of $S_{f_1}$ and $S_{f_2}$,
$[G \times G : H] = [{S_f}_1 \cap {S_f}_2 : \Q]$. As $K \subseteq {S_f}_1 \cap {S_f}_2$, $$[G \times G : H] \ge [K : \Q] = [G : X].$$
This is our first bound on the index of $X$ in $\Gal(f)$.

%The difference between $H$ and $G\times G$ depends on the intersection of $\Gamma_1$
%and $\Gamma_2$. 
%The larger the intersection, the larger is the full constant field $K$.
%THE INDEX OF $H$ IN $G\times G$ IS AN UPPER BOUND FOR $[K : \Q] = [\Gal(f) : \GGal(f)]$.

Since $K$ will contain all the constants of 
$\Q(t)[x]/f \subseteq {S_f}$ the degree of the exact constant field of $\Q(t)[x]/f$ as an extension of $\Q$
must divide $[K : \Q]$. This narrows down the number of subgroups which are candidates
for $X$ to those whose index is at most our bound and divisible by
this field degree. 

Lastly we know that the fixed field $K(t)$ of the subgroup $X \subseteq G$
%which is the geometric Galois group
is contained in $\C(t)$ so we can discount any subgroups whose fixed fields are not isomorphic to
rational function fields over algebraic extensions of $\Q$.

\begin{theorem}
Algorithm~\ref{algo_ggg} computes the geometric Galois group of $f \in \Q(\alpha)(t)[x]$.
\end{theorem}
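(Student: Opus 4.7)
The plan is to reduce the theorem to identifying $\GGal(f)$ with the subgroup of $G = \Gal(f/\Q(t))$ fixing $K(t)$, and then to show that among the normal subgroups passing the test in Step~4(b), $\GGal(f)$ is the unique one of smallest order so that Step~5 selects it. To start, I would establish the algebraic identification $\GGal(f) = \Gal(S_f/K(t))$ where $K$ is the algebraic closure of $\Q$ in $S_f$. Since $S_f/\Q(t)$ is Galois, the compositum $S_f \cdot \C(t)$ is Galois over $\C(t)$, and its Galois group --- which is $\GGal(f)$ by definition --- restricts isomorphically onto $\Gal(S_f/(S_f \cap \C(t)))$. Using $K = S_f \cap \C$ together with a standard regularity argument, one checks $S_f \cap \C(t) = K(t)$, so $\GGal(f) = \Gal(S_f/K(t))$. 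Because $K/\Q$ is Galois (as the algebraic closure of $\Q$ inside a Galois extension), $K(t)/\Q(t)$ is Galois and $\GGal(f)$ is a normal subgroup of $G$.

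Next I would characterise the normal subgroups $X \subseteq G$ retained by Step~4(b). By the Galois correspondence and Theorem~\ref{th_fixed}, $X$ corresponds to an intermediate field $L$ with $\Q(t) \subseteq L \subseteq S_f$, and the defining polynomial computed in Step~4(a) reveals whether $L$ is a rational function field over an algebraic extension $K'/\Q$. Such a presentation $L \cong K'(t)$ is equivalent to $K' \subseteq K$, which by the correspondence is in turn equivalent to $\GGal(f) \subseteq X$. Hence Step~4(b) retains precisely those normal $X \subseteq G$ containing $\GGal(f)$, and among these $\GGal(f)$ is uniquely the one of smallest order (largest index), corresponding to $K' = K$; this is what Step~5 returns, together with $K$.

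It remains to verify the search bounds in Step~4. By Hilbert's Irreducibility Theorem applied twice, there exist $t_1, t_2 \in \Q$ with $\Gal(f(t_i, x)) = G$; the splitting fields ${S_f}_i$ each contain $K$ because constants have valuation $0$ at every finite prime and so embed into the residue field at $t - t_i$. For $H = \Gal(f(t_1, x)\, f(t_2, x)) \subseteq G \times G$, the standard identity $[G \times G : H] = [{S_f}_1 \cap {S_f}_2 : \Q]$ together with $K \subseteq {S_f}_1 \cap {S_f}_2$ yields
$$[G \times G : H] \ge [K : \Q] = [G : \GGal(f)],$$
which is the first bound. Divisibility of $[G : \GGal(f)]$ by the degree $c$ of the exact constant field of $\Q(t)[x]/f$ follows because this constant field embeds into $K$, so the index search in Step~4 is exhaustive.

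The main obstacle is the regularity-type identity $S_f \cap \C(t) = K(t)$ and the matching statement that an intermediate field of $S_f/\Q(t)$ is isomorphic to a rational function field over an algebraic extension of $\Q$ exactly when its constant field sits inside $K$; once these two facts are in hand, the minimality argument and the quantitative bounds reduce to routine Galois-theoretic bookkeeping, with the computation of each candidate fixed field supplied by Theorem~\ref{th_fixed}. The generalisation of the statement from $\Q(t)[x]$ to $\Q(\alpha)(t)[x]$ proceeds verbatim, with $\Q$ replaced by $\Q(\alpha)$ and $\C$ by any algebraically closed overfield of $\Q(\alpha)$ throughout.
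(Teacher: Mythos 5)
Your proposal is correct and follows essentially the same route as the paper: reduce to identifying $\GGal(f)$ with the subgroup of $G$ fixing $K(t)$, then observe that the bounds in Step~4 only prune the search without discarding that group. Your version is somewhat more complete, in that you make explicit the base-change (translation) isomorphism $\GGal(f) \cong \Gal(S_f/(S_f \cap \C(t)))$ and the regularity identity $S_f\cap\C(t) = K(t)$, whereas the paper establishes the reverse inclusion $\Gal(f/K(t))\subseteq\GGal(f)$ by an informal element-wise argument that implicitly relies on the same two facts.
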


\begin{proof} 
Algorithm~\ref{algo_ggg} computes the Galois group $X$ of the largest constant 
field extension $K(t) = \Q(\alpha)(t) \subseteq S_f$. We have discussed above 
that $c \mid [K : \Q] = [G : X] \le [G\times G : H]$ so that these restrictions
only improve efficiency and do not remove this answer from consideration.

Since $K(t) \subset \C(t)$ we know that $\GGal(f) = \Gal(f/\C(t)) \subseteq \Gal(f/K(t))$. 
%SHOW : $[G : \GGal(f)] \le [G \times G : H]$
%SHOW : $\Gal(f/K(t)) \subseteq \Gal(f/\C(t))$.
Let $g \in \Gal(f/K(t))$ so that $g$ is an automorphism which permutes the 
roots of $f$. If $g$ fixes $\C(t)$ then $g \in \Gal(f/\C(t))$.
Let $y \in \C(t) \backslash K(t)$, then 
$y \not\in S_f/\Q(t)$. But all roots of $f$ are in $S_f/\Q(t)$ so $y$ cannot be
written as an algebraic combination of roots of $f$, therefore $g(y) = y$
and $g \in \GGal(f)$.

\begin{comment}
certainly computes a subgroup of $\Gal(f)$ satisfying 
the index and order conditions. 
We show that this group is the geometric Galois group.

During the loop Algorithm~\ref{algo_ggg} finds one or more normal subgroups $X$ which 
fix a field $\Q(\alpha)(t)\subset\C(t)$ for some $\alpha$. 
That $X$ is normal ensures that its fixed field $\Q(\alpha)(t)$ is also a normal
extension.
Algorithm~\ref{algo_ggg} returns the smallest group which fixes the largest of these fields.
Consider $y \in \C(t)\backslash\Q(\alpha)(t)$ where $\alpha$ is now chosen such 
that $\Q(\alpha)(t)$ is the largest of these fields. Let $g \in X$ where 
$X = \Gal(S_f/K(t))$ is
the corresponding normal subgroup of $G$. Since $X$ is a normal subgroup, 
$\Q(\alpha)(t)$ is a normal extension of $\Q(t)$ and so it is a splitting field 
for all polynomials over $\Q$ with a root in $\Q(\alpha)(t)$.
\end{comment}

%But this means that the fixed field of $X$ is larger than
%$\Q(\alpha)(t)$!! Doesn't this contradict some normality? NO, because $\C(t) \not\subset S_f$.
\end{proof}

\subsection{A Non-Trivial Example}
Most geometric Galois groups are equal to the Galois group of the polynomial
but we show here an example where that is not the case.

We identify transitive groups in the form $dTn$, the $n$th transitive group of
degree $d \le 32$ according to the ordering in the database of transitive groups
in {\sc Magma}~\cite{magma217}. This is the same numbering used in
GAP~\cite{gap} when $d < 32$ where the groups have been either confirmed or
provided by Hulpke~\cite{hulpke_class}. The transitive groups of
degree 32 were provided by~\cite{cannon_holt_class}.

\medskip
\hrule
\begin{example}
%Redone for V2.25-6
\label{eg_ggg}
Let 
\begin{multline*}
f = x^9 - 3 x^7 + (-6 t + 6) x^6 + 3 x^5 + (12 t + 6) x^4 + (12 t^2 + 84 t + 11) x^3 + \\
(-6 t + 6) x^2 + (-12 t^2 - 12 t + 24) x - 8 t^3 + 24 t^2 - 24 t + 6 \in \Q(t)[x]
\end{multline*}
be a polynomial over $\Q(t)$. This polynomial was computed as a defining
polynomial of 
$(\Q(t)[x]/\langle x^3 - 2\rangle)[y]/\langle y^3-2 t-y\rangle$ over $\Q(t)$.

This geometric Galois group and the algebraic closure of the constant field in
the splitting field can be computed in {\sc Magma} by
\begin{verbatim}
> F<t> := FunctionField(Rationals());
> P<x> := PolynomialRing(F);
> f := x^9 - 3*x^7 + (-6*t + 6)*x^6 + 3*x^5 + (12*t + 6)*x^4 + 
> (12*t^2 + 84*t + 11)*x^3 + (-6*t + 6)*x^2 + 
> (-12*t^2 - 12*t + 24)*x - 8*t^3 + 24*t^2 - 24*t + 6;
> time GeometricGaloisGroup(f);
Permutation group acting on a set of cardinality 9
Order = 6 = 2 * 3
    (2, 4)(3, 5)(7, 8)
    (1, 5, 3)(2, 9, 4)(6, 8, 7)
x^6 + 78732
GaloisData of type Q(t)

Time: 1.160
\end{verbatim}
The steps taken to compute this geometric Galois group are detailed below.
\begin{enumerate}
\item 
We compute the Galois group over $\Q(t), \Gal(f)$ as $9T8$, equivalently, $S_3 \times S_3$ of order 36. 
\begin{verbatim}
> G := GaloisGroup(f); G;
Permutation group G acting on a set of cardinality 9
Order = 36 = 2^2 * 3^2
    (2, 7)(4, 8)(6, 9)
    (2, 4)(3, 5)(7, 8)
    (1, 2)(3, 9)(4, 5)(6, 7)
> TransitiveGroupIdentification(G);
8 9
> TransitiveGroupDescription(GaloisGroup(f));
S(3)[x]S(3)=E(9):D_4
\end{verbatim}
\item Specialising $f$ at $t = 1, 2$ gives 
\begin{align*}
f_1 &= x^9 - 3 x^7 - 12 x^6 + 3 x^5 + 6 x^4 - 61 x^3 - 12 x^2 + 24 x - 62,\\
f_2 &= x^9 - 3 x^7 - 18 x^6 + 3 x^5 + 18 x^4 - 109 x^3 - 18 x^2 - 214
\end{align*}
with $\Gal(f_1), \Gal(f_2)$ conjugate to $9T8$. 
\item $H = \Gal(f_1 f_2)$ is an intransitive group of order 216. 

\begin{verbatim}
> f1 := Polynomial([Evaluate(x, 1) : x in Coefficients(f)]);
> f2 := Polynomial([Evaluate(x, 2) : x in Coefficients(f)]);
> H := GaloisGroup(f1*f2); H;  
Permutation group H acting on a set of cardinality 18
Order = 216 = 2^3 * 3^3
    (11, 15)(12, 17)(13, 14)
    (1, 7, 6)(2, 5, 4)(3, 9, 8)(10, 16, 18)(11, 13, 17)
                                            (12, 15, 14)
    (2, 9)(3, 4)(5, 8)
    (1, 5, 8)(2, 9, 6)(3, 7, 4)
    (2, 4)(3, 9)(6, 7)(12, 14)(13, 17)(16, 18)
    (10, 11, 15)(12, 18, 17)(13, 14, 16)
\end{verbatim}
\item 
We compute a bound using the order 36 of $G$ and
the order 216 of $H$ to give us an index bound for $\GGal(f)$ of
$36 \times 36 / 216 = 6$.
We also know that the exact constant field of the splitting field of $f$
must contain the exact constant field of $\Q(t)[x]/f$ which has degree 3 so
the order of $\GGal(f)$ must divide $36/3=12$. 
\begin{verbatim}
> #DirectProduct(G, G)/#H;      
6
> ExactConstantField(ext<F | f>);
Number Field with defining polynomial x^3 + 
    847813960/821097*x^2 + 
    655877533925420992/2022600850227*x + 
    357337502918317868824361728/14946763412869551171 
    over the Rational Field
\end{verbatim}
\begin{enumerate}
\item There are 2 normal subgroups of $\Gal(f)$, both isomorphic to $S_3$ of order 6, which satisfy this index and order restriction.
\begin{verbatim}
> subgroups := NormalSubgroups(G : IndexLimit := 
>              (36*36) div 216, OrderDividing := 
>              #G div 3);
> [IsIsomorphic(Sym(3), x`subgroup) : x in subgroups];              
[ true, true ]
\end{verbatim}
\item The corresponding fixed fields are defined by $x^6 + 78732$ and
$x^6 - 54 x^4 + 729 x^2 + 78732 t^2 - 2916$.
\begin{verbatim}
> GaloisSubgroup(f, subgroups[1]`subgroup);
$.1^6 - 54*$.1^4 + 729*$.1^2 + 78732*x^2 - 2916
((x2 + (x7 + x3)) + (2 * ((x9 + x6) + x1)))
> GaloisSubgroup(f, subgroups[2]`subgroup);
x^6 + 78732
((2 * (x1 + (x3 + x5))) + (x2 + (x9 + x4)))
\end{verbatim}
\end{enumerate}
\end{enumerate}

Only one of the fixed fields has a defining
polynomial over $\Q$ : $x^6 + 78732$. 
The other fixed field, 
which is defined by $x^6 - 54 x^4 + 729 x^2 + 78732 t^2 - 2916$,
is not isomorphic to its exact constant field, which is $\Q$.
Therefore the first fixed field is
the exact constant field of the splitting field of $f$ and the corresponding
group, isomorphic to $S_3$, is the geometric Galois group of $f$. 

\begin{verbatim}
> ExactConstantField(ext<F | $2>);
Rational Field
Mapping from: FldRat: Q to Algebraic function field defined over
Univariate rational function field over Rational Field by
x^6 - 54*x^4 + 729*x^2 + 78732*t^2 - 2916
\end{verbatim}
\begin{comment}
We have 2 possible defining polynomials. One is over $\Q$
so obviously defines an extension of $\Q$. The second polynomial contains a $t$ 
so does not obviously define an extension of $\Q$. In fact $\Q$ is algebraically
closed in the extension defined by this second polynomial.

Therefore the Geometric Galois group is the first subgroup, 
which is isomorphic to $S_3$, and the extension defined by the first
polynomial is the algebraic closure of $\Q$ in the splitting field of $f$.

These calculations have been carried out 
by calling {\tt GeometricGaloisGroup(f)}
in {\sc Magma V2.23}~\cite{magma223}, and took 1.22s.

There are 2 such normal subgroups of $\Gal(f)$ which satisfy this index and 
order restriction and they are both isomorphic to $S_3$ of order 6. 

These calculations have been carried out using {\sc Magma V2.23}~\cite{magma223}
and took 1.22s %on a Intel(R) Core(TM) i7-3770 CPU @ 3.40GHz with 32GB RAM.
on an Intel\regsym{} Core\trademark{} i7-3770 CPU @ 3.40GHz with 32GB RAM.
\end{comment}
\end{example}
\hrule

\subsection{Absolute Factorization}
\label{sect_abs}
We can use the information gained about the full constant field of the 
splitting field using Algorithm~\ref{algo_ggg} to compute an absolute factorization.
Thus we can compute the absolute factorization of $f\in\Q(t)[x]$ without using
calculations over $\C$ by factoring $f \in K(t)[x]$.
This approach is different to those described in~\cite{Cheze2005}.

\begin{definition}
The {\em absolute factorization} of a polynomial $f \in \Q(t)[x]$ is 
the factorization of $f$ as a polynomial over $\C(t)$.
\end{definition}
\begin{comment}
\begin{multline*}
f = x^9 - 3 x^7 + (-6 t - 6) x^6 + 3 x^5 + (12 t - 6) x^4 + \\
(12 t^2 - 84 t + 11) x^3 + (-6 t - 6) x^2 + (-12 t^2 + 12 t + 24) x\\
 - 8 t^3 - 24 t^2 - 24 t - 6 \in \Q(t)[x]
\end{multline*}
\begin{itemize}
\item
Factor $f$ over $\Q(t)[\alpha] = \Q(t)[x]/\langle x^6 + 78732\rangle$.
%the exact constant field of the splitting
%field of $f$ computed whilst calculating the geometric Galois group of $f$. 
\item
There are 3 cubic factors of $f$ over $\Q(t)[\alpha]$ : 
\begin{align*}
y^3 &- 1/486 \alpha^4 y^2 + (-1/9 \alpha^2 - 1) y + 1/1458 \alpha^4 - 2 t - 2 \\
y^3 &+ (1/972 \alpha^4 - 1/2 \alpha) y^2 + (-1/2916 \alpha^5 + 1/18 \alpha^2 - 1) y - 1/2916 \alpha^4 + 1/6 \alpha - 2 t - 2 \\
y^3 &+ (1/972 \alpha^4 + 1/2 \alpha) y^2 + (1/2916 \alpha^5 + 1/18 \alpha^2 - 1) y - 1/2916 \alpha^4 - 1/6 \alpha - 2 t - 2
\end{align*}
\end{itemize}
\end{comment}

%%%%%%%%%%%%%%%%%%%%%%%%%%%%%%%%%%%%%%%%%%%%%%%%%%%%%%%%%%%%%%%%%%%%%%%%%%%%%%%%

\medskip
\hrule
\begin{example}
%Redone for V2.25-6
Continuing Example~\ref{eg_ggg} we compute the absolute factorization of 
\begin{multline*}
f = x^9 - 3 x^7 + (-6 t - 6) x^6 + 3 x^5 + (12 t - 6) x^4 + (12 t^2 - 84 t + 11) x^3\\
+ (-6 t - 6) x^2 + (-12 t^2 + 12 t + 24) x - 8 t^3 - 24 t^2 - 24 t - 6 \in \Q(t)[x],
\end{multline*}
by computing 
a factorization of $f$ over 
$\Q(t)[\alpha] = \Q(t)[x]/\langle x^6 + 78732\rangle$,
the exact constant field of the splitting
field of $f$ computed whilst calculating the geometric Galois group of $f$. 
There are 3 cubic factors of $f$ over $\Q(t)[\alpha]$ : 
\begin{align*}
y^3 - 1/486 \alpha^4 y^2 + &(-1/9 \alpha^2 - 1) y + 1/1458 \alpha^4 - 
2 t + 2 \\
y^3 + (-1/972 \alpha^4 - 1/2 \alpha) y^2 + &(1/2916 \alpha^5 + 1/18 \alpha^2 - 1) y 
+ 1/2916 \alpha^4\\ &+ 1/6 \alpha - 2 t + 2 \\
y^3 + (-1/972 \alpha^4 + 1/2 \alpha) y^2 + &(-1/2916 \alpha^5 + 1/18 \alpha^2 - 1) y
+ 1/2916 \alpha^4\\ &- 1/6 \alpha - 2 t + 2
\end{align*}

\begin{verbatim}
> K := ext<F | x^6 + 78732>;
> _<y> := PolynomialRing(K);
> Factorization(Polynomial(K, f));
[
    <y^3 + (-1/972*K.1^4 - 1/2*K.1)*y^2 + 
        (1/2916*K.1^5 + 1/18*K.1^2 - 1)*y + 
        1/2916*K.1^4 + 1/6*K.1 - 2*t + 2, 1>,
    <y^3 + (-1/972*K.1^4 + 1/2*K.1)*y^2 + 
        (-1/2916*K.1^5 + 1/18*K.1^2 - 1)*y + 
        1/2916*K.1^4 - 1/6*K.1 - 2*t + 2, 1>,
    <y^3 + 1/486*K.1^4*y^2 + (-1/9*K.1^2 - 1)*y - 
        1/1458*K.1^4 - 2*t + 2, 1>
]
\end{verbatim}
In this case, we can verify as follows 
that splitting these factors results in no
further constant field extension and thus this factorization is the absolute
factorization of $f$.

The extension of $\Q(t)[\alpha]$ by the last factor above
is a field $F_{18}$
of degree 18 over $\Q(t)$ whose exact constant field is
of degree 6 over $\Q$; hence $F_{18} \cap \C = \Q[\alpha]$. Over $F_{18}$ 
$f$ factors into linear and quadratic factors.
Extending $F_{18}$ by one of the quadratics gives a degree 36 field which
must be a splitting field of $f$ as the degree matches the order of 
$\Gal(f)$. 
%Galois group of $f$, previously reported as 36. This degree 36 field also 
%has an exact constant field of degree 6 over $\Q$. Therefore the

Therefore the
factorization of $f$ into the three cubics above is the absolute factorization of $f$,
as the other field extensions necessary
to factorize $f$ further, $F_{18}$ and $S_f$,  
are not contained in $\C(t)$, which 
$\Q(t)[\alpha] = \Q(t)[x]/\langle x^6+78732\rangle \cong (\Q[x]/\langle x^6+78732\rangle)(t)$ is.

The work in the last paragraph was a verification for illustrative 
purposes only and is not necessary for the computation of the absolute factorization.

These calculations were performed after computing the geometric Galois group in a little over 1s, in 
0.08s for the absolute factorization and 25 min for the further factorizations,
construction of extensions and mostly for computations of algebraic closures of $\Q$.

\end{example} 
\hrule

\section*{Acknowledgements}
Examples have been run using {\sc Magma} V2.25-6~\cite{magma225}
on a Intel\regsym{} Core\trademark{} i7-3770 CPU @ 3.40GHz with 32GB RAM.
Some sections of this paper were most recently discussed during a visit the 
second author made to Kaiserslautern in 2019 following WIN-E3 while other
sections were discussed when the second author visited Kaiserslautern for
ANTS XII in 2016. The second author would like to thank A.\! Prof.\!\! Donald Taylor for some
productive discussions regarding some of the necessary group theory in the 
paper and Dr Mark Watkins for discussions regarding dual bases.

\bibliographystyle{amsalpha}
\bibliography{thesis}

\end{document}